\DeclareFontFamily{U}{wncy}{}
    \DeclareFontShape{U}{wncy}{m}{n}{<->wncyr10}{}
    \DeclareSymbolFont{mcy}{U}{wncy}{m}{n}
    \DeclareMathSymbol{\Sh}{\mathord}{mcy}{"58} 
\newtheorem*{thm*}{Theorem}
\newtheorem*{conj*}{Conjecture}
\newtheorem*{remark}{Remark}
\newtheorem{theorem}{Theorem}[section]
\newtheorem{lemma}[theorem]{Lemma}
\newtheorem*{definition}{Definition}
\newtheorem*{example}{Example}
\newtheorem{corollary}[theorem]{Corollary}
\newcommand{\Z}{\mathbb{Z}}
\newcommand{\Q}{\mathbb{Q}}
\newcommand{\F}{\mathbb{F}}
\newcommand{\R}{\mathbb{R}}
\newcommand{\NN}{\mathcal{N}}
\newcommand{\tor}{\mathrm{tor}}
\newcommand{\height}{\mathrm{ht}}
\newcommand{\Tam}{\mathrm{Tam}}
\newcommand{\leg}[2]{\genfrac{(}{)}{}{}{#1}{#2}}
\numberwithin{equation}{section}
\newcommand{\tp}{c}
\begin{document}
\title[Tamagawa products of elliptic curves]{Tamagawa products of elliptic curves over $\Q$}
\author{Michael Griffin, Ken Ono and Wei-Lun Tsai}
\address{Department of Mathematics, 275 TMCB, Brigham Young University, Provo, UT 84602}
\email{mjgriffin@math.byu.edu}
\address{Department of Mathematics, University of Virginia, Charlottesville, VA 22904}
\email{ken.ono691@virginia.edu}
\email{tsaiwlun@gmail.com}
\keywords{Elliptic curves, Tamagawa numbers, heights of rational points}

\begin{abstract}   
We explicitly construct the Dirichlet series
$$L_{\Tam}(s):=\sum_{m=1}^{\infty}\frac{P_{\Tam}(m)}{m^s},$$
where $P_{\Tam}(m)$ is the proportion of elliptic curves $E/\Q$ in short Weierstrass form  with Tamagawa product $m.$ 
Although there are no $E/\Q$ with everywhere good reduction, we prove that
the proportion with trivial Tamagawa product is $P_{\Tam}(1)={\color{black}0.5053\dots.}$
As a corollary, we find that $L_{\Tam}(-1)={\color{black}1.8193\dots}$ is the {\it average} Tamagawa product for  elliptic curves over $\Q.$  
We give an application of these results to canonical and Weil heights.
\end{abstract}

\maketitle
\section{Introduction and statement of results}\label{Intro}
 
It is well known that there are no elliptic curves $E/\Q$ with everywhere good reduction 
(for example, see Ch. VII-XIII of \cite{SilvermanFirstBook}).  In spite of this fact, there
are many $E/\Q,$ such as 
$$
E/\Q: \ \ y^2= x^3 -3x-4,
$$
(i.e. 5184.m1 in \cite{LMFDB})
with the weaker property that $[E(\mathbb{Q}_p):E_0(\mathbb{Q}_p)]=1$ for all primes  $p$, 
where $E_0(\mathbb{Q}_p)$ is the open subgroup of $E(\mathbb{Q}_p)$ consisting of nonsingular points.
These {\it Tamagawa trivial curves} 
have
\begin{equation}\label{TamagawaTrivial}
\Tam(E):=\prod_{p\ prime} c_p=1,
\end{equation}
where $c_p:= [E(\mathbb{Q}_p):E_0(\mathbb{Q}_p)]$ is the usual Tamagawa number at $p$.

Tamagawa trivial curves enjoy properties that motivate this note.
For example, if $E/\Q$ is  a Tamagawa trivial curve for which $E(\Q)$ has rank $r,$ then the Birch and Swinnerton-Dyer Conjecture predicts that
$$
\frac{L^{(r)}(E,1)}{r!}=\frac{|\Sh(E)| \cdot \Omega_E R_E}{|E_{\tor}(\Q)|^2}.
$$
Here $L(E,s)$ is the Hasse-Weil $L$-function for $E/\Q$,
$\Sh(E)$ is the Shafarevich-Tate group, $\Omega_E$ is the real period, $R_E$ is the regulator, and $E_{\tor}(\Q)$ is the $\Q$-rational torsion subgroup. Tamagawa trivial elliptic curves also play a prominent role  in the work of Balakrishnan, Kedlaya, and Kim \cite{BKK} that offers the first explicit positive genus examples of nonabelian Chabauty: the case of quadratic Chabauty for determining integral points on rank 1 elliptic curves
 \cite{BKK, Kim}. The main result of \cite{BKK} is formulated for these curves.  As a final example, we consider {\it convenient} elliptic curves, a subclass of Tamagawa trivial curves with the guaranteed property
 (i.e. without computing $E(\Q)$)  that 
 $\widehat{h}(P)\geq \frac{1}{2}h_W(P)$ for all $P\in E(\Q),$
 where $\widehat{h}(P)$ (resp. $h_W(P)$) is the canonical (resp. Weil) height of $P$.
 
It is also natural  to consider curves with any fixed Tamagawa product $m$. As motivation, we recall that algorithms of Mazur, Stein, and Tate \cite{MST} for computing the global $p$-adic heights of rational points
on elliptic curves assume that the reductions of points at primes of bad reduction are non-singular. In follow-up work by Balakrishnan, \c{C}iperiani, and Stein \cite{BCS}, and Balakrishnan, \c{C}iperiani, Lang, Mirza, and Newton \cite{BCLMN}, where points can be defined over a more general number field, such assumptions can be computationally expensive. The knowledge of the proportion of
curves with arbitrary Tamagawa product $m$ gives an indication of the cost of such algorithms.

Motivated by these applications, we compute
the proportion of short Weierstrass elliptic curves 
\begin{equation}\label{Model}
E=E(a_4, a_6): \ \ y^2 = x^3 +a_4x +a_6,
\end{equation}
with $a_4, a_6\in \Z$ and $\Delta(a_4,a_6):=-16(4a_4^3+27a_6^2)\neq 0,$ with $\Tam(E)=m.$
To this end, we recall that $E$ has {\it height} 
\begin{equation}
\height(E):=\max\{4|a_4|^3, 27a_6^2\},
\end{equation}
and we employ the counting function
\begin{equation}\label{NN}
\NN(X):=\# \{E=E(a_4, a_6) \ : \ \ \height(E) \leq X\},
\end{equation}
which is the number of  $E(a_4,a_6)$ with height $\leq X.$
The number  with Tamagawa product $m$ is 
\begin{equation}\label{NNT}
\NN_{m}(X):=\# \{ E:=E(a_4,a_6) \ : \ \height(E)\leq X\ {\text {\rm with $\Tam(E)=m$}}\},
\end{equation}
where $\Tam(E)$ is the product for the global minimal model\footnote{By Lemma~\ref{Minimal}, the proportion of $E=E(a_4,a_6)$ with $\height(E)\leq X$ that are already minimal is {\color{black}$\rho=0.9960\dots$}.} of $E$. 
Our aim is to compute
\begin{equation}
P_{\Tam}(m):=\lim_{X\rightarrow +\infty}\frac{\NN_{m}(X)}{\NN(X)}.
\end{equation}
We compute the Dirichlet series generating function for these proportions.

 \begin{theorem}\label{Tam_m} The  $P_{\Tam}(m)$ are well-defined, and are the Dirichlet coefficients of
 $$
 L_{\Tam}(s):=\sum_{m=1}^{\infty}\frac{P_{\Tam}(m)}{m^s}=\prod_{p\ prime} \left(\frac{\delta_p(1)}{1^s}+\frac{\delta_p(2)}{2^s}+\frac{\delta_p(3)}{3^s}+\dots\right),
 $$
 where $\delta_p(\tp)$ are rational numbers defined in Lemma~\ref{Four}.
 \end{theorem}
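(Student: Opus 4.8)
The plan is to reduce the statement to a local--global principle for the Tamagawa number combined with a lattice-point sieve in the height box. Since $\Tam(E)=\prod_p c_p$ with $c_p=1$ for all but finitely many $p$, the asserted factorization
$$
L_{\Tam}(s)=\prod_{p\ prime}\Big(\sum_{n\ge 1}\frac{\delta_p(n)}{n^s}\Big)
$$
is, coefficient by coefficient, exactly the claim that
$$
P_{\Tam}(m)=\sum_{\substack{(n_p)_p:\ \prod_p n_p=m}}\ \prod_{p\ prime}\delta_p(n_p),
$$
the outer sum running over all sequences of positive integers $(n_p)$, almost all equal to $1$, with $\prod_p n_p=m$. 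So it suffices to define the local densities, show each $P_{\Tam}(m)$ exists and equals this product, and justify that the product is coefficient-wise well defined. For the first point I would note that the Tamagawa number of the global minimal model is produced by Tate's algorithm purely from the $p$-adic valuations and congruences of $(a_4,a_6)$ — including the non-minimality test ``$p^4\mid a_4$ and $p^6\mid a_6$,'' after which one rescales $(a_4,a_6)\mapsto(a_4/p^4,a_6/p^6)$ and repeats. Hence $c_p$ is a well-defined, measurable, locally constant function on $\Z_p^2\setminus\{\Delta=0\}$, and one sets
$$
\delta_p(n):=\mu_p\big(\{(a_4,a_6)\in\Z_p^2:\ c_p(E(a_4,a_6))=n\}\big),
$$
where $\mu_p$ is normalized Haar measure; their rationality is the content of Lemma~\ref{Four}, which I take as given.

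Next I would establish the product formula for a finite set of primes. Fix a finite set $S$ and target values $n_p$ for $p\in S$. For each fixed value, Tate's algorithm shows that the event ``$c_p=n_p$'' is detected by congruences on $(a_4,a_6)$ modulo a bounded power $p^{k_p}$ (up to the measure-zero locus $\Delta=0$, and absorbing the finitely many rescaling levels relevant inside a bounded box). The region $\{\height(E)\le X\}$ is the expanding box $|a_4|\le (X/4)^{1/3}$, $|a_6|\le (X/27)^{1/2}$, of area $\asymp X^{5/6}$, and counting its integer points in a fixed residue class modulo $Q=\prod_{p\in S}p^{k_p}$ gives the main term $Q^{-2}\cdot(\text{area})$ with a boundary error of strictly lower order, uniformly over the $Q^2$ classes. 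Summing over the classes realizing the chosen $n_p$ and dividing by $\NN(X)$ shows the proportion of curves in the box with $c_p=n_p$ for all $p\in S$ converges, as $X\to\infty$, to $\prod_{p\in S}\delta_p(n_p)$; independence across distinct primes is guaranteed by the Chinese Remainder Theorem.

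It remains to pass from finite $S$ to all primes, which is the main obstacle. I would prove the uniform tail estimate that $c_p>1$ forces either additive reduction or multiplicative reduction of type $I_n$ with $n\ge 2$, each of which imposes at least the codimension-two condition $p\mid a_4,\ p\mid a_6$ or $v_p(\Delta)\ge 2$; hence $\sum_{n\ge 2}\delta_p(n)=O(p^{-2})$ and $\sum_{p}\sum_{n\ge 2}\delta_p(n)<\infty$. This convergence makes each coefficient $P_{\Tam}(m)=\sum_{\prod_p n_p=m}\prod_p\delta_p(n_p)$ an absolutely convergent sum — the infinite products $\prod_p\delta_p(1)$ and the sums over the placements of the finitely many nontrivial factors all converge — so the Euler product is well defined term by term. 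The genuinely delicate step is to make this tail bound uniform in $X$ at the level of \emph{integer} counts rather than $p$-adic measures: inside the box one can have $v_p(\Delta)\ge 2$ for primes as large as $\asymp X^{1/2}$, so one must bound, uniformly in $X$ and with no loss from the non-minimal rescalings, the number of curves with $c_p>1$ for some $p>Y$ by $O\!\big(\sum_{p>Y}p^{-2}\big)$. Granting this uniform control, one may interchange the limit $X\to\infty$ with the product over primes and conclude that $P_{\Tam}(m)=\sum_{\prod_p n_p=m}\prod_p\delta_p(n_p)$ for every $m$, which is precisely the claimed Euler factorization of $L_{\Tam}(s)$.
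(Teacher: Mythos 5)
Your proposal is correct and follows essentially the same route as the paper: local densities $\delta_p(n)$ extracted from the reformulated Tate's algorithm (including the rescaling $(a_4,a_6)\mapsto(a_4/p^4,a_6/p^6)$ for non-minimal models), independence across distinct primes via the Chinese Remainder Theorem, and convergence of the Euler product from the tail bound $\sum_{n\geq 2}\delta_p(n)=O(p^{-2})$, which is exactly the paper's comparison of $\prod_p \delta_p(1)$ with $1/\zeta(2)=\prod_p\left(1-p^{-2}\right)$. The one step you flag but only grant --- the uniform-in-$X$ control, at the level of integer counts, of primes $p>Y$ that can be as large as $\asymp X^{1/2}$ inside the height box --- is likewise left implicit in the paper's proof (which passes formally from multiplicativity to the product and then only verifies convergence of the coefficients), so your treatment is, if anything, more scrupulous than the published argument.
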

 
 \begin{remark} The number $\delta_p(\tp)$ is  the proportion of  short Weierstrass curves whose minimal model
has $c_p=\tp.$
Cremona and Sadek \cite{CremonaSadek} compute such proportions for long Weierstrass models, which are different for $p\in \{2, 3\}.$
Our choice is motivated by an application to heights (see Corollary~\ref{MainCorollary}).
\end{remark}

\begin{corollary}\label{TamagawaTrivial} Assuming the notation above, the following are true.

\begin{enumerate}
\item We have that
$$P_{\Tam}(1)= \prod_{p\ prime} \delta_p(1) ={\color{black} 0.5053\dots},
$$
where $\delta_2(1)=241/396$, $\delta_3(1)=1924625/2125728$, and for primes $p\geq 5$ we have
$$
\delta_p(1)= \displaystyle{\  1-\frac{p(6p^7+9p^6+9p^5+7p^4+8p^3+7p^2+9p+6)}{6(p+1)^2(p^8+p^6+p^4+p^2+1)}}.
$$

 \item For primes $\ell$, we have
$P_{\Tam}(\ell)=\sum_{p} \delta_p(\ell)\prod_{\substack{q\neq p\\ prime}} \delta_q(1).$
\end{enumerate}
\end{corollary}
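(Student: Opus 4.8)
The plan is to obtain both statements as coefficient extractions from the Euler product for $L_{\Tam}(s)$ furnished by Theorem~\ref{Tam_m}. Write $F_p(s):=\sum_{\tp\geq 1}\delta_p(\tp)\,\tp^{-s}$ for the local factor at $p$, so that $L_{\Tam}(s)=\prod_p F_p(s)$. Once absolute convergence is in place, expanding the product shows that the coefficient of $m^{-s}$ is
$$
P_{\Tam}(m)=\sum_{\prod_p \tp_p=m}\ \prod_p \delta_p(\tp_p),
$$
the outer sum ranging over all factorizations of $m$ into local Tamagawa numbers $\tp_p$ with all but finitely many equal to $1$. Both parts of the corollary are then read off by specializing $m$.

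For part (1), set $m=1$. The only admissible factorization is $\tp_p=1$ for every $p$, whence $P_{\Tam}(1)=\prod_p\delta_p(1)$. To make this infinite product meaningful I would verify absolute convergence using the closed form at $p\geq 5$: the rational quantity subtracted from $1$ has numerator of degree $8$ and denominator of degree $10$, each with leading coefficient $6$, so $\delta_p(1)=1-p^{-2}+O(p^{-3})$ and $\sum_p\bigl(1-\delta_p(1)\bigr)$ converges. The closed form itself, together with the values $\delta_2(1)=241/396$ and $\delta_3(1)=1924625/2125728$, comes directly from Lemma~\ref{Four} --- either by reading off $\delta_p(\tp)$ at $\tp=1$ or via $\delta_p(1)=1-\sum_{\tp\geq 2}\delta_p(\tp)$ --- the small primes differing because the short Weierstrass normalization alters the local analysis at $p\in\{2,3\}$. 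The decimal $0.5053\dots$ is then a finite evaluation, with the tail controlled by the $O(p^{-2})$ estimate.

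For part (2), fix a prime $\ell$ and set $m=\ell$. Since $\ell$ is prime, every factorization $\prod_p\tp_p=\ell$ with cofinitely many factors equal to $1$ must have exactly one prime $p$ carrying $\tp_p=\ell$ and all others carrying $\tp_q=1$; no splitting $\ell=a\cdot b$ with $a,b>1$ is possible. Summing $\delta_p(\ell)\prod_{q\neq p}\delta_q(1)$ over the choice of the distinguished prime $p$ gives the claimed formula. Each inner product is bounded by part (1), and the outer sum converges because $\delta_p(\ell)\to 0$ rapidly: realizing $\tp_p=\ell$ for $\ell\geq 5$ forces a reduction type (e.g.\ split multiplicative of Kodaira type $I_\ell$) that constrains $\ord_p(\Delta)$, a condition of vanishing measure as $p\to\infty$.

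The combinatorial coefficient extraction is formal; the genuine content is the two bookkeeping points it rests on: justifying that $\prod_p F_p(s)$ converges absolutely in a region where its Dirichlet coefficients coincide with the limiting proportions $P_{\Tam}(m)$ guaranteed by Theorem~\ref{Tam_m}, and confirming that the subtracted rational function in $\delta_p(1)$ is $O(p^{-2})$ uniformly in $p\geq 5$. Both follow from the degree count above, so once Lemma~\ref{Four} is granted the corollary is essentially immediate.
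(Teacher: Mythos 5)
Your proposal is correct and matches the paper's route: the corollary is read off from the Euler product of Theorem~\ref{Tam_m} by coefficient extraction (for $m=1$ every local factor contributes $\delta_p(1)$; for $m=\ell$ prime exactly one local factor contributes $\delta_p(\ell)$), with the explicit values and the bound $1-p^{-2}<\delta_p(1)<1$ for $p\geq 5$ taken from Lemma~\ref{Four}, exactly as in the paper's convergence argument comparing with $1/\zeta(2)$. The paper treats the corollary as immediate from the theorem, and your write-up supplies the same bookkeeping.
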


\begin{example}
These tables give $P_{\Tam}(1),\dots, P_{\Tam}(12),$ and show the convergence to $P_{\Tam}(1),$ $P_{\Tam}(2),$ and $P_{\Tam}(3).$
\begin{table}[h]
\begin{tabular}{|r|cc|cc|cc|cc|cc|cc|cc|cc|}
\hline
$m$    && $1$ && $2$ && $3$  && $4$ && $5$ && $6$ \\   \hline
$P_{\Tam}(m)$ && {\color{black}$0.5053\dots$} && {\color{black}$0.3391\dots$} && {\color{black}$0.0683\dots$} && {\color{black}$0.0622\dots$} && {\color{black}$7.98\dots\times 10^{-5}$} && {\color{black}$0.0158\dots$} \\
\hline
\hline 
$m$    && $7$ && $8$ && $9$ && $10$ && $11$ && $12$ \\   \hline
$P_{\Tam}(m)$ && {\color{black}$5.56\dots\times 10^{-6}$} && {\color{black}$0.0056\dots$} && {\color{black}$0.0011\dots$} && {\color{black}$4.56\dots \times 10^{-5}$} && {\color{black}$2.01\dots \times 10^{-7}$} && {\color{black}$0.0015\dots$} \\
\hline
\end{tabular} 
\smallskip
\caption{Proportions $P_{\Tam}(1),\dots, P_{\Tam}(12)$}
\end{table}

\begin{center}
\begin{table}[!ht]
\begin{tabular}{ | c | c |c|c| }
\hline
{$X$} & {$\NN_{1}(X)/\NN(X)$} & {$\NN_2(X)/\NN(X)$} &  {$\NN_3(X)/\NN(X)$} \\ \hline
$10^6$ & $0.5072\dots$  & $0.3384\dots$ & $0.0672\dots$ \\ \hline
$10^{8}$ & $0.5056\dots$  & $0.3389\dots$ & $0.0685\dots$ \\ \hline
$\vdots$ & $\vdots \ \ \ $ & $\vdots \ \ \ $& $\vdots \ \ \ $ \\ \hline
$\infty $ & ${\color{black}0.5053\dots}$  & ${\color{black}0.3391\dots}$ & ${\color{black}0.0683\dots}$ \\ \hline
\end{tabular}
\medskip
\caption{Convergence to $P_{\Tam}(1), P_{\Tam}(2)$ and $P_{\Tam}(3)$}
\end{table}
\end{center}
\end{example}

To place Theorem~\ref{Tam_m} in context, we recall that
work by Klagsbrun and Lemke-Oliver \cite{KLO}, and of Chan, Hanselman and Li \cite{CHL} shows that 
$\Tam(E)$ is unbounded in families of elliptic curves with prescribed $\Q$-rational 2-torsion. Moreover, Figure A.14 of \cite{DataBase}  suggests an  ``average Tamagawa product''
 of $\approx 1.82\dots,$ which we confirm using
 \begin{equation}
S_{\Tam(X)} :=\sum_{\height(E(a_4,a_6))\leq X} \Tam(E(a_4,a_6)),
\end{equation}
and the convergent special value of $L_{\Tam}(-1).$

\begin{theorem}\label{AverageTamagawa}
We have
$$
L_{\Tam}(-1)=\lim_{X\rightarrow +\infty} \frac{S_{\Tam}(X)}{\NN(X)}
= {\color{black}1.8193\dots}
$$
\end{theorem}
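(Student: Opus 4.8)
The plan is to recognize the displayed limit as a term-by-term resummation of the Dirichlet series at $s=-1$, and then to justify an exchange of a limit and an infinite sum. For every $X$ one has the exact identity
$$
\frac{S_{\Tam}(X)}{\NN(X)} = \frac{1}{\NN(X)}\sum_{m=1}^{\infty} m\,\NN_m(X) = \sum_{m=1}^{\infty} m\,\frac{\NN_m(X)}{\NN(X)},
$$
obtained by grouping the curves of height at most $X$ according to the value $\Tam(E)=m$, while formally
$$
L_{\Tam}(-1)=\sum_{m=1}^{\infty} m\,P_{\Tam}(m)=\prod_{p}\mu_p, \qquad \mu_p:=\sum_{c\ge 1} c\,\delta_p(c),
$$
the last equality coming from the Euler product of Theorem~\ref{Tam_m}. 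By Theorem~\ref{Tam_m} each summand converges, $\lim_{X\to\infty} m\,\NN_m(X)/\NN(X)=m\,P_{\Tam}(m)$, so the whole content of the theorem is the interchange of this limit with the sum over $m$, together with the convergence and numerical evaluation of $\prod_p\mu_p$.

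The lower bound is immediate from Fatou's lemma for series: the terms $m\,\NN_m(X)/\NN(X)$ are nonnegative and each converges, so
$$
\liminf_{X\to\infty}\frac{S_{\Tam}(X)}{\NN(X)} \ \ge\ \sum_{m=1}^{\infty} m\,\lim_{X\to\infty}\frac{\NN_m(X)}{\NN(X)} \ =\ \sum_{m=1}^{\infty} m\,P_{\Tam}(m)\ =\ L_{\Tam}(-1).
$$
The matching upper bound is the crux, and it requires a tail estimate uniform in $X$. The plan is to establish a uniformly bounded higher moment,
$$
\sup_{X}\ \frac{1}{\NN(X)}\sum_{\height(E)\le X}\Tam(E)^2 \ =:\ C\ <\ \infty,
$$
after which Markov's inequality yields $\sum_{m>M} m\,\NN_m(X)/\NN(X)\le C/M$ uniformly in $X$. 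Splitting the sum at $m\le M$, letting $X\to\infty$ via Theorem~\ref{Tam_m}, and then sending $M\to\infty$ forces $\limsup_X S_{\Tam}(X)/\NN(X)\le L_{\Tam}(-1)$, which combined with the Fatou bound gives the claimed limit.

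The uniform second-moment bound is where the real work lies, and I expect it to be the main obstacle. Writing $\Tam(E)^2=\prod_p c_p^2$ and invoking the Kodaira-type bound $c_p\le v_p(\Delta)$ at primes of bad reduction (read off from Tate's algorithm, noting that $\Delta_{\min}\mid\Delta$), one reduces to controlling $\sum_{\height(E)\le X}\big(\prod_{p\mid\Delta}v_p(\Delta)\big)^2$, a moment of a divisor-type function evaluated at the binary form $\Delta(a_4,a_6)$, where $|\Delta|\ll X$. The danger is the rare curves whose product is dominated by a single prime of large split multiplicative reduction $I_n$; these are individually large, so their aggregate contribution must be controlled by splitting primes into a small range $p\le Y$, handled by the exact densities $\delta_p$ of Lemma~\ref{Four}, and a large range $p>Y$, handled by crude counting of $v_p(\Delta)=n$ via the geometry of the discriminant locus (the density of $v_p(\Delta)\ge n$ being $O(p^{-n})$). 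The same local analysis also shows $\mu_p=1+O(p^{-2})$, since $1-\delta_p(1)=\sum_{c\ge2}\delta_p(c)\ll p^{-2}$, so that $\prod_p\mu_p$ converges absolutely; a numerical evaluation of this product from the explicit $\delta_p(c)$ then produces $L_{\Tam}(-1)=1.8193\dots$.
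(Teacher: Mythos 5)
Your proposal is correct in outline, and it takes a genuinely more analytic route than the paper. The paper's own proof consists essentially of the computation you relegate to the last step: it writes $L_{\Tam}(-1)=\prod_p \mu_p$ with $\mu_p=\sum_{\tp\geq 1}\tp\,\delta_p(\tp)$, checks convergence from Lemma~\ref{Four} exactly as you do (for $p\geq 5$, $\delta_p(1)=1-p^{-2}+o(p^{-2})$ and $0<\tp\,\delta_p(\tp)<\tp\,p^{-\tp}$ for $\tp\geq 2$, so $\mu_p=1+O(p^{-2})$; separately $\mu_2=1.4941\dots$ and $\mu_3=1.1109\dots$), and then evaluates the product numerically. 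The identification of this Euler product with $\lim_{X\to\infty}S_{\Tam}(X)/\NN(X)$ is made with only the proviso ``provided that this expression is convergent''; the interchange of $X\to\infty$ with the sum over $m$ is left implicit. Your Fatou lower bound together with the Markov-truncation upper bound is precisely the justification the paper omits, and your diagnosis that a tail estimate uniform in $X$ is the crux is accurate: convergence of $\sum_m m\,P_{\Tam}(m)$ alone does not yield the limit, since the $P_{\Tam}(m)$ are themselves limiting proportions.

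Two cautions on the one ingredient you leave unproved, the uniform second moment. First, after the (correct) reduction $c_p\leq v_p(\Delta_{\min})\leq v_p(\Delta)$, bounding $\Tam(E)^2$ by $d(\Delta)^2$ and invoking divisor-moment bounds for polynomial arguments would give only $(\log X)^{O(1)}$, not $O(1)$; you must genuinely exploit that $\prod_{p\mid \Delta}v_p(\Delta)^2$ exceeds $1$ only on the high-divisibility locus, which is what your small/large prime splitting is for. Second, the asserted density $O(p^{-n})$ for $v_p(\Delta)\geq n$ is a limiting density, whereas the uniform bound needs finite-$X$ counts: for instance, when $p\nmid a_4$ the congruence $27a_6^2\equiv -4a_4^3 \pmod{p^n}$ has $O(1)$ solutions in $a_6$, giving $\ll X^{5/6}p^{-n}+X^{1/3}$ pairs of height at most $X$, and the degenerate locus $p\mid a_4$ together with the range where $p^n$ is comparable to the box dimensions requires separate (routine but real) treatment. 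Modulo carrying out that counting, your argument is complete and in fact establishes the theorem more rigorously than the published proof.
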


\begin{example}
Table 3 illustrates Theorem~\ref{AverageTamagawa}.

\begin{center}
\begin{table}[!ht]
\begin{tabular}{ | c || c | c | c | c | c |}
\hline
{$X$} & $10^4$ & $10^6$ & $10^8$  $\dots$ &$\dots$ & $\infty$ \\ \hline
{$S_{\Tam}(X)/\NN(X)$} & $1.8358\dots$  & $1.8291\dots$ & $1.8240\dots$ & $\dots$ & ${\color{black}1.8193\dots}$ \\ \hline
 \end{tabular}
\medskip
\caption{Convergence to average Tamagawa product}
\end{table}
\end{center}
\end{example}
\begin{remark} In \cite{DataBase} the authors only count a single curve in each isomorphism class of curves in short Weierstrass form. Counting in this way would would alter  $S_{\Tam}(X)$ and $\NN(X)$ by a factor approaching $\zeta(10)$ (as seen in the proof of Lemma \ref{Minimal}). However, the limiting ratio $L_{\Tam}(-1)$ would remain unchanged.
\end{remark}

Finally,
 we identify a subclass of elliptic curves that are convenient for height calculations.
 For $E=E(a_4,a_6),$ each 
 $P\in E(\Q)$ has the form $P=(\frac{A}{C^2},\frac{B}{C^3})$, with $A,B,C\in \Z,$ with $\gcd(A,C)=\gcd(B,C)=1$. The {\it naive height} of $P$ is
$H(P):=\max(|A|, |C^2|).$ 
The Weil height is 
$h_W(P):=\log H(P),$
and the {\it canonical height} is 
\begin{equation}
\widehat h(P):= \tfrac{1}{2}\lim_{n\to \infty}\frac{h_W(nP)}{n^2}.
\end{equation}

Logarithmic and canonical heights are generally close. 
Generalizing an observation of Buhler, Gross and Zagier \cite{BGZ}, we identify a natural subset of Tamagawa trivial curves that  automatically 
(i.e. without computing $E(\Q)$) have the property that 
$\widehat{h}(P)\geq \frac{1}{2}h_W(P)$ for every $P\in E(\Q)$ 
\begin{definition}
A Tamagawa trivial $E(a_4, a_6)/\Q$ is {\bf convenient} if it satisfies one of the following:
  \begin{enumerate}
 \item We have that $E(a_4, a_6)$ is a minimal model, and that $E(\R)$ has one connected component with
  $$ a_4\leq 0 \ \ \ {\text and}\ \ \ [\alpha,\infty)\subset\left\{x\in \R:2a_4x^2+8a_6x-a_4^2<0\right\},$$ where $\alpha$ is the real root of $x^3+a_4x+a_6$. 
  
 \item We have that $E(a_4, a_6)$ is a minimal model, and that
 $E(\R)$ has two connected components with
  $$a_4\leq 0 \ \ \ {\text and}\ \ \ [\gamma,\beta]\cup[\alpha,\infty)\subset\left\{x\in \R:2a_4x^2+8a_6x-a_4^2<0\right\},$$ where $\gamma<\beta<\alpha$ are the  real roots of $x^3+a_4x+a_6$.
 \end{enumerate}
\end{definition}

\begin{lemma}\label{Convenient}
 If $E(a_4,a_6)$ is convenient, then  the following are true.
 
 \smallskip 
 \noindent (1) If $E(\R)$ has one connected component, 
  then for all $P\in E(\Q)$ we have\[\widehat{h}(P)\geq \frac{1}{2}h_W(P).\]
 
 \smallskip  \noindent (2) If  $E(\R)$ has two connected components and  
 the smooth and continuous function $F_E(x)$ defined in \eqref{F_E} is positive for all $x\in (-1,1) \cap \big\{[\gamma,\beta]\cup [\alpha,\infty)\big\},$ then for all $P\in E(\Q)$ we have\[\widehat{h}(P)\geq \frac{1}{2}h_W(P).\]
\end{lemma}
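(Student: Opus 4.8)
The plan is to reduce the global inequality to a single inequality at the real place, and then to analyze that via Tate's explicitly convergent series for the archimedean local height. Write the canonical height as a sum of N\'eron local heights, $\widehat{h}(P)=\widehat{\lambda}_{\infty}(P)+\sum_{p}\widehat{\lambda}_{p}(P)$, and recall that the Weil height decomposes as $h_W(P)=\sum_{v}\log^{+}|x(P)|_{v}$, where $\log^{+}t=\log\max(t,1)$ and $v$ runs over all places of $\Q$. At a finite prime $p$ the N\'eron local height equals $\tfrac12\log^{+}|x(P)|_{p}$ precisely when $P$ reduces to a nonsingular point lying on the identity component of the special fiber; since a convenient $E$ is a minimal model that is Tamagawa trivial we have $E(\Q_p)=E_0(\Q_p)$ for every $p$, so this holds at every finite place and the finite contributions to $\widehat{h}(P)$ and to $\tfrac12 h_W(P)$ cancel identically. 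Thus $\widehat{h}(P)\geq\tfrac12 h_W(P)$ is equivalent to the purely archimedean statement $\widehat{\lambda}_{\infty}(P)\geq\tfrac12\log^{+}|x(P)|$, which I now aim to establish on $E(\R)$.

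For the archimedean analysis I would use the duplication formula $x(2P)=\bigl(x^4-2a_4x^2-8a_6x+a_4^2\bigr)/\bigl(4(x^3+a_4x+a_6)\bigr)$, whose numerator is exactly $x^4-\bigl(2a_4x^2+8a_6x-a_4^2\bigr)$. This is the origin of the quantity $2a_4x^2+8a_6x-a_4^2$ in the definition of convenient: its sign on $E(\R)$ controls the sign of the successive terms in Tate's series for $\widehat{\lambda}_{\infty}$ built from the forward doubling orbit $\{2^nP\}_{n\geq0}$. The key geometric input is that the $x$-projection of $E(\R)$, namely $[\alpha,\infty)$ in the one-component case and $[\gamma,\beta]\cup[\alpha,\infty)$ in the two-component case, is forward invariant under $P\mapsto 2P$; moreover $[2]$ annihilates the component group $E(\R)/E(\R)^{0}$, so for any $P$ the doubles $2^nP$ with $n\geq1$ all lie on the identity component, whose $x$-projection is $[\alpha,\infty)$. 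Consequently the hypothesis $2a_4x^2+8a_6x-a_4^2<0$ on $[\alpha,\infty)$ controls the entire tail of the orbit, while the extra requirement of the same inequality on $[\gamma,\beta]$ in case (2) is needed only to control the single possible initial term arising when $P$ sits on the bounded oval. Together with $a_4\leq 0$ these sign conditions force the orbit contributions into the correct direction, giving $\widehat{\lambda}_{\infty}(P)\geq\tfrac12\log|x(P)|$.

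The main obstacle is the regime $|x(P)|<1$, where $\tfrac12\log^{+}|x(P)|=0$ but $\tfrac12\log|x(P)|<0$, so the bound just obtained is too weak and one must instead prove the sharper $\widehat{\lambda}_{\infty}(P)\geq0$ there. In the one-component case (part (1)) the single interval $[\alpha,\infty)$ and the sign hypotheses already pin down $\widehat{\lambda}_{\infty}$ tightly enough that this nonnegativity holds automatically, and no further condition is required. In the two-component case (part (2)) the bounded oval over $[\gamma,\beta]$ obstructs this, and the nonnegativity on the small-$x$ locus must be imposed directly: this is exactly the role of the auxiliary function $F_E(x)$ of \eqref{F_E}, whose positivity on $(-1,1)\cap\bigl\{[\gamma,\beta]\cup[\alpha,\infty)\bigr\}$ is assumed. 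I expect the verification that $F_E>0$ on this set---an explicit, non-telescoping estimate of the archimedean local height rather than a formal consequence of the doubling recursion---to be the genuinely hard part of the argument; granting it, assembling the orbit terms with the established signs completes both parts of the lemma.
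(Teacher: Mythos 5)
Your global strategy coincides with the paper's: decompose $\widehat{h}(P)$ into local heights, use Tamagawa triviality plus minimality to identify every finite local height with $\tfrac12\log^{+}|x(P)|_p$ (so the finite places cancel against $\tfrac12 h_W(P)$), and then analyze the archimedean height via Tate's series, where the hypothesis $2a_4x^2+8a_6x-a_4^2<0$ on the $x$-projection $D$ (here $D:=[\alpha,\infty)$ or $[\gamma,\beta]\cup[\alpha,\infty)$) gives $z_n>1$ and hence $\widehat{h}(P)-\tfrac12 h_W(P)=\tfrac18\sum_{n\ge0}4^{-n}\log z_n>0$ whenever $|x(P)|\ge 1$. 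However, your treatment of part (1) in the regime $|x(P)|<1$ has a genuine gap: you assert that nonnegativity of the archimedean local height ``holds automatically'' from the interval structure and sign hypotheses, but it does not follow from those alone. For $|x|<1$ one has $F_E(x)=\tfrac12\log|x|+\tfrac18\sum_{n\ge0}4^{-n}\log z_n$ with a negative leading term, and $z_n>1$ gives no quantitative lower bound to compensate --- this is precisely why part (2) must \emph{assume} $F_E>0$ on $(-1,1)\cap D$ rather than deduce it. The paper's actual argument is that in case (1) the regime is empty: the convenient conditions ($a_4\le 0$ and the sign condition on $[\alpha,\infty)$) force, via Cardano's formula, $\alpha>1$, so every rational point satisfies $x(P)>\alpha>1$ and only the $|x(P)|\ge1$ computation is ever invoked. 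Without this (or some substitute), your part (1) is unproved.

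A secondary omission: you call \eqref{F_E} ``Tate's explicitly convergent series,'' but its convergence is not automatic and is needed before one may write $\widehat{h}_{\infty}(P)=F_E(x(P))$ (Theorem 1.2 of \cite{Silverman88}). The paper proves convergence by showing the doubling orbit $x_n$ stays bounded away from $0$ on $D$, using $\varepsilon:=\inf_{x\in D}(-2a_4x^2-8a_6x+a_4^2)>0$ together with the suprema $s_1,s_2$, and uses the regrouped form \eqref{F_E*} to handle $x=0$; some such argument must appear in a complete write-up. Your remark that $[2]$ kills $E(\R)/E(\R)^{0}$, so that the tail $2^nP$ ($n\ge1$) lands on the identity component, is correct but unnecessary here, since the sign condition is hypothesized on all of $D$, not merely on $[\alpha,\infty)$.
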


As a corollary to Theorem~\ref{Tam_m}, we show that convenient curves  have a natural density using
\begin{equation}\label{NNc}
\NN_{c}(X):=\# \{ E(a_4,a_6) \ {\text {\rm convenient}} \ : \  \height(E(a_4,a_6))\leq X\}.
\end{equation}

\begin{corollary}\label{MainCorollary} 
 We have
$$
\lim_{X\rightarrow +\infty}\frac{\NN_c(X)}{\NN(X)}=\frac{12805748865(2+\sqrt{6})}{1830488\pi^{10}}\cdot P_{\Tam}(1)=0.1679\dots.
$$
\end{corollary}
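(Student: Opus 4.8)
The plan is to factor the defining constraints of convenience into finite-place conditions (being a global minimal model and being Tamagawa trivial) and a single archimedean condition (the real inequality of the Definition), to show these become asymptotically independent, and then to evaluate each factor separately. Writing $A$ for the limiting archimedean density and $D_{\mathrm{fin}}$ for the limiting density of the finite conditions, I expect
\[
\lim_{X\to\infty}\frac{\NN_c(X)}{\NN(X)}=A\cdot D_{\mathrm{fin}},
\]
after which the problem reduces to computing $A$ and $D_{\mathrm{fin}}$. The key structural observation is that the archimedean condition is homogeneous for the weighted scaling $(a_4,a_6)\mapsto(t^4a_4,t^6a_6)$: under $x\mapsto t^2x$ the roots of $x^3+a_4x+a_6$ and the sign of $q(x):=2a_4x^2+8a_6x-a_4^2$ are preserved, as is $\operatorname{sign}\Delta(a_4,a_6)$, which governs the number of components of $E(\R)$. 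Since $\height$ is homogeneous of the same weight, the real-analytic locus is a cone and its proportion of the height box tends to a fixed weighted volume $A$. Because minimality at $p$ and the value $c_p$ are cut out by congruences on $(a_4,a_6)$, equidistribution of $\Z^2$ modulo any fixed $N$ inside the scaled regions---the same input used for Theorem~\ref{Tam_m}---makes the archimedean and finite densities multiply in the limit.

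Next I would show $D_{\mathrm{fin}}=\rho\cdot P_{\Tam}(1)$, with $\rho$ the minimality density of Lemma~\ref{Minimal} and $P_{\Tam}(1)=\prod_p\delta_p(1)$ from the Euler product of Theorem~\ref{Tam_m}. Working one prime at a time over $\Z_p^2$ with Haar measure, let $m_p$ be the density of models minimal at $p$ and let $\mu_p(1)$ be the density of models that are simultaneously minimal at $p$ and have $c_p=1$. The non-minimal locus at $p$ is the scaled copy $(p^4\Z_p)\times(p^6\Z_p)$, and the minimal model of $(p^4a',p^6a'')$ is the minimal model of $(a',a'')$; hence the distribution of $c_p$ of the minimal model restricted to the non-minimal locus is identical to its unconditional distribution. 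Conditioning on minimality therefore gives the renewal identity
\[
\delta_p(1)=\mu_p(1)+(1-m_p)\,\delta_p(1),
\]
so that $\mu_p(1)=m_p\,\delta_p(1)$. Multiplying over all primes yields $D_{\mathrm{fin}}=\big(\prod_pm_p\big)\big(\prod_p\delta_p(1)\big)=\rho\cdot P_{\Tam}(1)$, where the factors at $p=2,3$ use the short-Weierstrass values from Lemma~\ref{Four}, and $\prod_{p\geq5}(1-p^{-10})=1/\zeta(10)$ contributes the $\pi^{10}$ in the stated constant.

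The main obstacle is the explicit evaluation of $A$. Here I would normalize the scaling against the height box, restrict to $a_4\leq0$, and split according to $\operatorname{sign}\Delta$ into the one-component and two-component cases. In each case the inclusions $[\alpha,\infty)\subset\{q<0\}$ and $[\gamma,\beta]\subset\{q<0\}$ translate, via the position of the larger root of the downward parabola $q$ relative to the roots $\alpha,\beta,\gamma$ of $x^3+a_4x+a_6$, into explicit polynomial inequalities in $(a_4,a_6)$. The boundary of the admissible region occurs where the larger root of $q$ meets a root of the cubic; solving this quadratic boundary condition is what forces the surd $2+\sqrt6$. Integrating the admissible region against the weighted measure and dividing by the volume of the full height box produces $A$, and this volume integral---with its case split and its algebraic boundary---is the delicate computational step.

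Finally, I would assemble $A\cdot\rho\cdot P_{\Tam}(1)$, collecting the rational contributions of $A$ and of the primes $2,3$ together with $\prod_{p\geq5}(1-p^{-10})=1/\zeta(10)$ into $\tfrac{12805748865}{1830488\pi^{10}}$, carrying the $\sqrt6$ into the numerator as $2+\sqrt6$, and retaining $P_{\Tam}(1)$ as the remaining factor. This gives the asserted value $0.1679\dots$, with the only genuinely nonroutine input being the archimedean volume $A$.
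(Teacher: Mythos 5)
Your overall architecture coincides with the paper's: the density factors as (archimedean condition) $\times$ (minimal and Tamagawa trivial), the finite part is $\rho\cdot P_{\Tam}(1)$ with $\rho$ from Lemma~\ref{Minimal}, and the archimedean part is handled through the weighted homogeneity, which the paper implements via $(x,y)\to(\sqrt{|a_4|}\,x,|a_4|^{3/4}y)$, reducing to the one-parameter family $y^2=x^3-x+T$ with $T=a_6/|a_4|^{3/2}$. The genuine gap is at the center of your proposal: the archimedean density $A$, which is the entire computational content of the corollary, is never evaluated---you defer it as ``the delicate computational step'' and assert that the assembly yields the stated constant. The paper's proof actually does this work: it locates the critical values $T=\pm 2/\sqrt{27}$ (discriminant of $F(x,T)=x^3-x+T$), $T=\pm1/\sqrt{8}$ (discriminant of $G(x,T)=-2x^2+8Tx-1$), and $T=\pm 3/8$ (where $F$ and $G$ share a root), tests the condition on each intervening interval to find the admissible set $(-\infty,-2/\sqrt{27})\cup(-1/\sqrt{8},1/\sqrt{8})$, and integrates against the height box to obtain $\kappa_1=3/20$ and $\kappa_2=3\sqrt{6}/40$, whence the factor $3(2+\sqrt6)/40$. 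Your one concrete claim about this step is also off: the boundary $T=-2/\sqrt{27}$ is the one-versus-two-component transition (vanishing of the cubic's discriminant), not a shared-root locus; the surd $\sqrt6$ enters through the tangency bound $|T|<1/\sqrt{8}$ interacting with the height normalization $\sqrt{X/27}$ (via $\sqrt{27/8}=3\sqrt6/4$), while the ``$2$'' in $2+\sqrt6$ is the rational contribution $\kappa_1$ from the unbounded one-component range. Without the interval-by-interval analysis you cannot exclude, e.g., $T>2/\sqrt{27}$ or the band $1/\sqrt{8}<|T|<2/\sqrt{27}$ (where $\pm 3/8$ sit), so asserting the final value is assuming the conclusion.

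There is a second, subtler defect in your finite-place argument. The renewal identity $\mu_p(1)=m_p\,\delta_p(1)$ rests on the premise that the non-minimal locus at $p$ is exactly $(p^4\Z_p)\times(p^6\Z_p)$ and that the $c_p$-distribution of minimal models on that locus replicates the unconditional one. This holds for $p\geq 5$ by Lemma~\ref{Minimalp5}, but it is false at $p=2,3$: Lemmas~\ref{Minimalp2} and~\ref{Minimalp3} exhibit further non-minimal families (e.g.\ $(a_4,a_6)\equiv(5,6)\pmod 8$ with $d\geq 12$, or $\alpha_4=\alpha_6=3$ with $d\geq 12$) whose minimal models leave short Weierstrass form altogether, and whose $c_p$-statistics are the $\widehat\delta_p$ tables rather than scaled copies of the $\delta_p$. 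Consequently your identity cannot be derived as stated at $2$ and $3$; indeed, from the Appendix tables one has $\sum_K\delta_3'(K,1)=17820/19683$, which does not equal $m_3\,\delta_3(1)=\tfrac{19682}{19683}\cdot\tfrac{1924625}{2125728}$ (the discrepancy is of size about $10^{-7}$, invisible at the displayed precision but fatal to the claimed exact per-prime identity). The finite-place accounting at $p=2,3$ must instead go through the $\delta_p'$ and $\widehat\delta_p$ bookkeeping of Section~\ref{TateAlgorithm}, as the paper does when it writes the limit as $\kappa\cdot P_{\Tam}(1)$ with $\kappa=\rho\,(\kappa_1+\kappa_2)$.
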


\begin{example}
Table 4 illustrates Corollary~\ref{MainCorollary}. 
\begin{center}
\begin{table}[!ht]
\begin{tabular}{ | c || c | c | c | c | c |}
\hline
{$X$} & $10^5$ & $10^6$ & $10^7$  $\dots$ &$\dots$ & $\infty$ \\ \hline
{$\NN_c(X)/\NN(X)$} & $0.1741\dots$  & $0.1687\dots$ & $0.1678\dots$ & $\dots$ & ${\color{black}0.1679\dots}$ \\ \hline
 \end{tabular}
\medskip
\caption{Proportions of convenient curves}
\end{table}
\end{center}
\end{example}

The results obtained here are not difficult to derive. They follow from an analysis of Tate's algorithm. In Section~\ref{TateAlgorithm} we provide a
slight reformulation of the algorithm that is amenable for arithmetic statistics. In Section~\ref{AverageTamagawaProof} we prove  Theorems~\ref{Tam_m} and
\ref{AverageTamagawa} using this analysis. In Section~\ref{ConvenientLemma}
we prove Lemma~\ref{Convenient} by making use of standard facts about local height functions,
and by adapting a clever device of Buhler, Gross and Zagier \cite{BGZ}. Finally, in Section~\ref{CorollaryProof} we derive Corollary~\ref{MainCorollary} from Theorem~\ref{Tam_m} and Lemma~\ref{Convenient}.

\section*{Acknowledgements} \noindent  The second author thanks the NSF (DMS-2002265 and DMS-2055118) and
the  UVa Thomas Jefferson fund. The authors thank Jennifer Balakrishnan for useful discussions, and for pointing out the predicted average Tamagawa product.

\section{Tate's algorithm over $\Z$}\label{TateAlgorithm} Given a prime $p$,
Tate's algorithm \cite{SilvermanAdvanced, Tate} is a recursive procedure that determines the minimal model, conductor, Kodaira type, and the Tamagawa number $c_p$  of an elliptic curve. There are eleven steps, often involving changes of variable that produce simpler $p$-adic models. The algorithm can terminate at any of the first ten steps.  Curves that reach the eleventh step are not $p$-minimal. This eleventh step then applies the substitution $(x,y)\to (p^2x,p^3y),$
giving a $p$-integral model with discriminant that is reduced by a factor of $p^{12}.$ 
The resulting curve is inserted into the algorithm at step one, and the algorithm eventually  terminates due to the nonvanishing of discriminants.

We offer a reformulation of the algorithm  that is suited
for arithmetic statistics. Our goal is to compute $\delta'_p(K,\tp),$ the proportion of curves
$E(a_4,a_6)$ that are $p$-minimal, have Kodaira type $K,$ and Tamagawa number $c_p=\tp$.
We consider short Weierstrass curves $E=E(a_4,a_6)$, where $a_4, a_6\in \Z,$ with non-zero discriminant
$\Delta=\Delta(a_4,a_6):=-16(4a_4^3+27a_6^2).$ A model is {\it $p$-minimal} if the $p$-adic valuation of $\Delta$ is minimal among $p$-integral models. 
  The desired proportions, as well as some others, are summarized in tables in the Appendix.

\subsection{Classification for primes $p\geq 5$}\label{TateP5}

 For primes $p\geq 5,$
 the algorithm uses 
 five or seven numbers, which we refer to as the
 {\it Tate data for $E$ at $p$}
(i.e. $\alpha_4, \alpha_6, A_4, A_6, d\in \Z$ and $t, s\in \Z_p^{\times}$). The first five quantities are easily defined. We let $d:=v_p(\Delta),$ and we let  (note: $v_p(0):=+\infty)$
\begin{equation}\label{TwoOne}
\alpha_4:=v_p(a_4) \ \ \ {\text {\rm and}}\ \ \ \alpha_6:=v_p(a_6).
\end{equation}
Moreover, if $a_4\neq 0$ or $a_6\neq 0$, then $A_4$ and $A_6$ are defined by 
\begin{equation}\label{TwoTwo}
a_4=p^{\alpha_4}A_4 \ \ \ {\text {\rm and}}\ \ \ a_6=p^{\alpha_6}A_6.
\end{equation} 

We require the following $p$-minimality criterion, whose proof defines the invariants $s$ and $t$.

\begin{lemma}\label{Minimalp5} For primes $p\geq 5,$
$E(a_4, a_6)$
is non-minimal at $p$ if and only if $\alpha_4\geq 4$ and $\alpha_6\geq 6.$
\end{lemma}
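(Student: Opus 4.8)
The plan is to prove this minimality criterion by analyzing how the substitution $(x,y)\mapsto (p^2x, p^3y)$ — the non-minimality step of Tate's algorithm for $p\geq 5$ — acts on short Weierstrass models, and to verify that the necessary and sufficient condition for being able to descend is precisely $\alpha_4\geq 4$ and $\alpha_6\geq 6$. Since $p\geq 5$, we have $\mathrm{char}(\F_p)\neq 2,3$, so every $p$-integral model can be kept in short Weierstrass form and there is no subtlety from the more elaborate admissible changes of variable that complicate the $p\in\{2,3\}$ cases. The inverse of the non-minimality substitution replaces $a_4$ by $p^{-4}a_4$ and $a_6$ by $p^{-6}a_6$, which scales the discriminant $\Delta$ by $p^{-12}$; this is exactly the transformation that produces a simpler $p$-adic model when the model is non-minimal.

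First I would record the transformation law. Under $(a_4,a_6)\mapsto (p^{-4}a_4, p^{-6}a_6)$ we get a new short Weierstrass model with coefficients $a_4'=p^{-4}a_4$ and $a_6'=p^{-6}a_6$, and new discriminant $\Delta'=p^{-12}\Delta$. This new model has integral (hence $p$-integral) coefficients precisely when $p^4\mid a_4$ and $p^6\mid a_6$, i.e. when $\alpha_4=v_p(a_4)\geq 4$ and $\alpha_6=v_p(a_6)\geq 6$. Thus if $\alpha_4\geq 4$ and $\alpha_6\geq 6$, the model is non-minimal because we have exhibited a $p$-integral model with strictly smaller valuation $v_p(\Delta')=d-12<d$. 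This establishes the ``if'' direction, and along the way it is natural to define the invariants $s$ and $t$ referenced in the statement: when the reduction is of a given type one extracts a unit $t\in\Z_p^\times$ (e.g. from $A_4$ or $A_6$ after factoring out the relevant power of $p$) and a corresponding $s\in\Z_p^\times$, though the precise normalization is a bookkeeping choice I would fix to match the later Tate-data analysis.

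For the ``only if'' direction I would argue the contrapositive: suppose $\alpha_4\leq 3$ or $\alpha_6\leq 5$, and show the model is already minimal. The key point is that any $p$-integral short Weierstrass model $p$-isomorphic to $E$ differs from it by a substitution $(x,y)\mapsto (u^2x, u^3y)$ with $u\in\Q_p^\times$, under which $a_4\mapsto u^{-4}a_4$ and $a_6\mapsto u^{-6}a_6$. Writing $v_p(u)=k$, integrality of the new model forces $\alpha_4-4k\geq 0$ and $\alpha_6-6k\geq 0$, so $k\leq \lfloor \alpha_4/4\rfloor$ and $k\leq\lfloor \alpha_6/6\rfloor$. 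To lower the discriminant valuation below $d$ one needs $k\geq 1$; but if $\alpha_4\leq 3$ then $\lfloor\alpha_4/4\rfloor=0$, and if $\alpha_6\leq 5$ then $\lfloor\alpha_6/6\rfloor=0$, so in either case $k\leq 0$ and no reduction is possible. Hence $d$ is already minimal.

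The main obstacle, and the only genuinely delicate point, is justifying that for $p\geq 5$ it suffices to consider $p$-isomorphisms of this pure scaling form $(x,y)\mapsto (u^2x,u^3y)$ — that is, that the general admissible change of variables $(x,y)\mapsto(u^2x+r, u^3y+u^2sx+w)$ cannot be exploited to beat the scaling bound. For short Weierstrass form with $p\geq 5$ this reduces to checking that the translation parameters $r,s,w$ can always be cleared (completing the cube/square is invertible away from characteristics $2$ and $3$) without improving the valuation obstruction, so the valuation of $u$ alone governs whether descent is possible. I would cite the standard transformation formulas for elliptic curves (as in Silverman) to handle this cleanly rather than recomputing them, and this is where the restriction $p\geq 5$ is essential.
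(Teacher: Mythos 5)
Your proposal is correct, but it takes a genuinely different route from the paper's proof. The paper handles the hard direction by valuation bookkeeping inside Tate's algorithm: a non-minimal model with $\alpha_4\leq 3$ or $\alpha_6\leq 5$ would force the cancellation $v_p(4a_4^3)=v_p(27a_6^2)<v_p(\Delta)$, hence $3\alpha_4=2\alpha_6$ with $\alpha_6\in\{0,3\}$; the paper then extracts units $t,s\in\Z_p^{\times}$ with $a_4=-3p^{\alpha_4}t^2$ and $a_6=2p^{\alpha_6}t^3+p^{d-\alpha_6}s$, translates the singular point to the origin to obtain the model \eqref{subs_t}, and observes that Tate's algorithm terminates within the first ten steps on that model, which certifies $p$-minimality. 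You instead argue structurally through the transformation theory of Weierstrass equations: for $p\geq 5$ every $p$-integral model can be put in short form by a $u=1$ change of variables with $r,s,w\in\Z_p$ (completing the square and cube, since $2,3\in\Z_p^{\times}$), which leaves $v_p(\Delta)$ unchanged; any isomorphism between two short models over $\Q_p$ forces $r=s=w=0$ (from $2s=0$, $3r=0$, $2w=0$ in characteristic zero), so only pure scalings $(x,y)\mapsto(u^2x,u^3y)$ compete, and the integrality constraints $\alpha_4-4k\geq 0$, $\alpha_6-6k\geq 0$ together with $k=v_p(u)\geq 1$ yield exactly the stated criterion. Both arguments are sound; note that since the paper's notion of $p$-minimality quantifies over all $p$-integral (long) models, your reduction to short models really is the crux, and you correctly isolated it as the delicate step. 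Your route is more self-contained and conceptual---it is essentially the classical minimality criterion and needs only Silverman's transformation formulas, with no analysis of the algorithm's termination---whereas the paper's proof does double duty: its case analysis constructs the invariants $s$ and $t$ and the auxiliary model \eqref{subs_t} on which the computations of the $\delta_p'(K,\tp)$ throughout Section~\ref{TateAlgorithm} depend, a construction you explicitly defer as a ``bookkeeping choice.'' That deferral is harmless for the lemma as stated, but in the context of the paper those invariants would have to be produced somewhere else.
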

\begin{proof}
Obviously, if $\alpha_6\geq 6$ and $\alpha_4\geq 4$, then the curve is non-minimal at $p$. 
To justify the converse, we note that
any counterexamples would satisfy $v_p(\Delta)\geq 12$ and
\begin{eqnarray}\label{BAD}\
v_p(4a_4^3)=v_p(27a_6^2)<v_p(\Delta),
\end{eqnarray}
due to the required cancellation of $p$-adic valuations.
Furthermore, (\ref{BAD})  implies that
$2\alpha_6=3\alpha_4$, which in turn implies that
$3\mid \alpha_6$ and $2\mid \alpha_4$.

For curves satisfying (\ref{BAD}), 
we define invariants $s$ and $t$ which are often required for Tate's algorithm.
As $v_p(\Delta)>0$, we have that
 $-3A_4$ is a quadratic residue modulo $p,$ and so there is
 a $p$-adic unit $t\in \Z_p^\times$ with $a_4=-3p^{\alpha_4}t^2.$
After a short calculation, which requires the correct choice of sign for $t$, we obtain a $p$-adic unit $s\in \Z_p^{\times}$ for which $a_6=2p^{\alpha_6}t^3+p^{d-\alpha_6} s.$
The utility of $s$ and $t$ arises from the singular point
 $(p^{\alpha_6/3}t,0)$ on 
$$
y^2\equiv x^3-3p^{\alpha_4}t^2 x +2p^{\alpha_6}t^3 \pmod{p^{d-\alpha_6}}.
$$
Under the substitution $x\to x+p^{\alpha_6/3}t$, it is mapped conveniently to $(0,0)$ on
\begin{equation}\label{subs_t}
y^2= x^3+3p^{\alpha_6/3}tx^2 +p^{d-\alpha_6}s.
\end{equation}
This substitution does not change the discriminant, and one can apply
 Tate's algorithm to this simpler model. For
 $\alpha_6\in \{0,3\},$  the algorithm terminates at one of the first ten steps, implying
 the  $p$-minimality of the original model.
The remaining cases
satisfying (\ref{BAD}) have $\alpha_6\geq 6$ and $\alpha_4\geq 4,$ thereby completing the proof.
\end{proof}

We now reformulate the algorithm (for $p\geq 5$) by identifying its steps with
one of eleven disjoint possibilities for $(\alpha_4, \alpha_6, d)$. 
The first ten cases, which correspond to $p$-minimal models, are in one-to-one correspondence with the possible Kodaira types. 
We compute  $\delta'_p(K,\tp)$ in each case. 
By Lemma~\ref{Minimalp5}, the eleventh case, where $\alpha_4\geq 4$ and $\alpha_6\geq 6,$ correspond to non-minimal short Weierstrass models.
We follow the steps as ordered in
Section IV.9 of  \cite{SilvermanAdvanced}, and so it would be convenient for the reader to have this reference readily available.

\smallskip
\noindent
{\bf Case 1} ($\boldsymbol{d=0}$). 
This case is Kodaira type $I_0,$ which has good reduction at $p.$ As $d=v_p(\Delta)=0,$ we have $(a_4, a_6)\not \equiv (-3w^2, 2w^3)\pmod{p},$ for any $w\in \F_p.$ Therefore, there are $p^2-p$ choices of $(a_4,a_6)$ out of $p^2$ total possible pairs modulo $p$, and so Tate's algorithm gives $\delta_p'(I_0,1)=(p-1)/p.$

\smallskip
\noindent
{\bf Case 2} ($\boldsymbol{\alpha_4=\alpha_6=0<d}$). This case is Kodaira type $I_{n\geq 1},$ where $0<n=d=v_p(\Delta).$  
We require (\ref{BAD}) (i.e. cancellation of $p$-adic valuations in the discriminant), and we employ the discussion in the proof of Lemma~\ref{Minimalp5}, where $(a_4,a_6)=(-3t^2,2t^3+p^ds).$ If we let
$\varepsilon(n):=((-1)^n+3)/2,$ then the algorithm gives
$$c_p:=\frac{n\left(1+\leg{3t}{p}\right)}{2}+\frac{\varepsilon(n)\left(1-\leg{3t}{p}\right)}{2}.$$

For each $d\geq 1$, we consider $(a_4,a_6)\pmod{p^{d+1}}$. Since $p\nmid t,$ there are $p^{d+1}-p^d$ choices of $t.$ We also have $p-1$ choices of $s\not \equiv 0\pmod{p}$. This gives us $p^d(p-1)^2$ choices of $(a_4,a_6)$ out of $p^{2d+2}$ total possible pairs modulo ${p^{d+1}}$. If $n>2,$ then half of these choices will have $c_p=n$, and the other half will have $c_p=1$ or $2$.  
Therefore, we have that
$\delta_p'(I_1,1)=(p-1)^2/p^3,$ $\delta_p'(I_2,2)=(p-1)^2/p^4,$ and $\delta_p'(I_{n\geq 3},n)=
 \delta_p'(I_{n\geq 3},\varepsilon(n))=(p-1)^2/2p^{n+2}.$

\smallskip
\noindent
{\bf Case 3} ($\boldsymbol{\alpha_4\geq 1 \textbf{ and } \alpha_6=1}$).   This is Kodaira type $II,$ with $c_p=1.$  There are $p$ choices of $a_4\pmod{p^2}$ and $p-1$ choices of $a_6 \pmod{p^2},$ giving $p(p-1)$ many options from $p^4$ possibilities. Therefore, we have $\delta_p'(II,1)=(p-1)/p^3.$

\smallskip
\noindent
{\bf Case 4} ($\boldsymbol{\alpha_4= 1 \textbf{ and } \alpha_6\geq 2}$).  This case is Kodaira type $III,$ where $c_p=2.$ There are $p-1$ choices of $a_4\pmod{p^2},$ and $1$ choice of $a_6 \pmod{p^2}.$
As there are $p^4$ many possible pairs, we have $\delta_p'(III,2)=(p-1)/p^4.$

\smallskip
\noindent
{\bf Case 5} ($\boldsymbol{\alpha_4\geq 2 \textbf{ and } \alpha_6= 2}$).  This case is Kodaira type $IV,$ where $c_p\in \{1, 3\}.$  As $a_6=p^2A_6,$ Tate's algorithm gives $c_p:= 2+\left(\frac{A_6}{p}\right).$ 
There are $p$ choices of $a_4\pmod{p^3}$ and $p-1$ choices of $a_6 \pmod{p^3},$ and $p^6$ possible pairs modulo  ${p^3}$. Half of these pairs have $c_p=1$ (resp. $c_p=3$). Therefore, we have $\delta_p'(IV,1)=\delta_p'(IV,3)=(p-1)/2p^5.$

\smallskip
\noindent
{\bf Case 6} ($\boldsymbol{\alpha_4\geq 2, \alpha_6\geq 3 \textbf{ and }   d=6}$). This case is Kodaira type $I_0^*,$ where $c_p\in\{1,2,4\}$.  
This step of Tate's algorithm requires an auxiliary polynomial $P(T),$ which we now define. Given a long Weierstrass model 
 \begin{equation}\label{LongModel}
 y^2+\underbar{a}_1 xy +\underbar{a}_3y = x^3+\underbar{a}_2x^2+\underbar{a}_4x+\underbar{a}_6,
 \end{equation}
 (possibly after a change of variable) so that $p\mid \underbar a_2,$ $p^2\mid \underbar a_4,$ and $p^3\mid \underbar a_6,$ we define
 \begin{equation} \label{Def_P}
P(T):=T^3+p^{-1}a_2T^2+ p^{-2}a_4T +p^{-3}a_6.
 \end{equation}
As we are working with short Weierstrass models, we have $P(T)=T^3+a_4p^{-2}T +a_6p^{-3},$ and its discriminant is $ 2^{-4}p^{-6}\Delta$. Since $d=6$, we see that $P(T)$ has distinct roots modulo $p$.  In this case, the algorithm states that the Tamagawa number is $1$ more than the number of roots of $P(T) $ in $\F_p$ (the finite field with $p$ elements). 

To calculate $\delta_p'(I_0^*,1),$ we count the number of trace $0$ separable cubics over $\F_p.$ 
We first consider the number of choices of $P(T)$ which are irreducible, corresponding to $c_p=1$. There are $p^3-p$ elements of $\F_{p^3}$ not in $\F_p$, and $1/p$ of those have trace $0$. Thus there are $(p^2-1)/3$ possible choices for $P(T)$ to be irreducible. We next consider the possibility that $P(T)$ factors as $Q(T)(T-\alpha)$, with $Q(T)$ an irreducible quadratic, corresponding to $c_p=2$. In this case $\alpha$ is uniquely determined by $Q(T)$ so that the trace of $P(T)$ is $0$. There are $(p^2-p)/2$ irreducible quadratics modulo $p$, and therefore $(p^2-p)/2$ possible choices of $P(T)$ with a single root in $\F_p$. Finally we consider the case that $P(T)$ factors completely in $\F_p$ with distinct roots, corresponding to the case $c_p=4$. There are ${p}\choose {3}$ ways of choosing $3$ distinct roots in $\F_p$, and $1/p$ of those have trace $0$. Thus, there are $(p-1)(p-2)/6$ choices for $P(T)$ in this case.  There are $p^3$ total possible choices of $a_4 \pmod{p^3}$ and $p^4$ total possible choices of $a_6 \pmod{p^4}.$ 
Therefore, we 
find that $\delta_p'(I_0^*,1)=(p^2-1)/3p^7,$ $\delta_p'(I_0^*,2)=(p-1)/2p^6,$ and
$\delta_p'(I_0^*,4)=(p-1)(p-2)/6p^7.$

\smallskip
\noindent
{\bf Case 7} ($\boldsymbol{\alpha_4=2, \ \alpha_6=3 \textbf{ and } d>6}$).
This case is Kodaira type $I^*_{n\geq 1 }$ with $c_p\in \{2,4\},$ where $0<n:=d-6.$  We employ model (\ref{subs_t}), where $a_4=-3 p^2\,t^2$ and $a_6=2p^3t^3+p^{d-3}s.$ Tate's algorithm requires the polynomial $P(T)$ defined in (\ref{Def_P}), as well as an additional  auxiliary polynomial which we denote by $R(Y).$ Given a long Weierstrass model (\ref{LongModel}) (possibly after a change of variable) so that $p^2\mid \underbar a_3,$ and $p^4\mid \underbar a_6,$ we define
 \begin{equation} \label{Def_R}
R(Y):=Y^2+p^{-2}a_3Y- p^{-4}a_6.
 \end{equation}
As our curves are in short form, we have $P(T)=T^3+3tT^2 +p^{n}s,$ and $R(Y)=Y^2-p^{n-1}s.$  The Tamagawa number $c_p$ depends on $P(T)$ and $R(Y)$. 
  If $n$ is odd, then we consider the roots of $R(Y)$. We make the substitution $p^{-n+1}R(p^{\frac{n-1}{2}}Y)=Y^2-s$ which corresponds to the application of the sub-procedure of Step $7$ of Tate's algorithm $n$ times. The number of roots depends on whether or not $s$ is a quadratic residue. The  algorithm gives $c_p:= 3+\left(\frac{s}{p}\right).$
 If $n$ is even, then we consider the number of roots of $P(T)$. We make the substitution $p^{-n}P(p^{\frac{n}{2}}T)\equiv 3tT^2+s\pmod{p}$ which corresponds to $n$ steps through the sub-procedure of Step $7$ of Tate's algorithm. The number of roots depends on whether or not $-s/3t$ is a quadratic residue. The algorithm gives that $c_p:=3+\left(\frac{-s/3t}{p}\right).$ 
 
To calculate the proportion of curves satisfying this condition for a given $n\geq 1,$ we note that $(a_4,a_6) \pmod{p^{n+4}}$ is determined by any choice of $t \pmod{p^{n+2}}$ and $s\pmod{p},$ where  $p\nmid st.$ Half of the possible choices of $s$ correspond to each $c_p$. Therefore, we obtain  $\delta_p'(I^*_{n},2)=\delta_p'(I^*_{n },4)=(p-1)^2/2p^{7+n}.$

\smallskip
\noindent
{\bf Case 8} ($\boldsymbol{\alpha_4\geq 3 \textbf{ and } \alpha_6= 4}$).  
This is Kodaira type $IV^*$, with $c_p\in \{1, 3\}$.  As $a_6=p^4A_6,$ the algorithm gives $c_p:=2+\left(\frac{A_6}{p}\right).$ 
There are $p^2$ choices of $a_4\pmod{p^5}$ and $p-1$ choices of $a_6 \pmod{p^5}$, and there are $p^{10}$ possible pairs. Half of these pairs have $c_p=1$ (resp. $c_p=3$), and so
we obtain $\delta_p'(IV^*,1)=\delta_p'(IV^*,3)=(p-1)/2p^8.$

\smallskip
\noindent
{\bf Case 9} ($\boldsymbol{\alpha_4= 3 \textbf{ and } \alpha_6\geq 5}$).  This case is Kodaira type $III^*,$ where $c_p=2$. There are $p(p-1)$ many choices of $a_4\pmod{p^5},$ and $1$ choice of $a_6 \pmod{p^5}.$ As there are $p^{10}$ possible pairs modulo ${p^5},$ we obtain
$\delta_p'(III^*,2)=(p-1)/p^9.$

\smallskip
\noindent
{\bf Case 10} ($\boldsymbol{\alpha_4\geq 4 \textbf{ and } \alpha_6=5}$).  This case is Kodaira type $II^*,$ where $c_p=1$.  This case depends on $(a_4,a_6)\pmod{p^{6}}$. There are $p^2$ choices of $a_4\pmod{p^6}$ and $p-1$ choices of $a_6 \pmod{p^6}.$  As there are $p^{12}$ possible pairs modulo ${p^6},$ we obtain
$\delta_p'(II^*,1)=(p-1)/p^{10}.$

\smallskip
\noindent
{\bf Case 11} ($\boldsymbol{\alpha_4\geq 4 \textbf{ and } \alpha_6\geq 6}$). As the model is not minimal,  the algorithm replaces $a_4$ and $a_6$ with $a_4/p^{4}$ and $a_6/p^{6}$ respectively. One repeats these substitutions until one obtains a model which is one of the ten cases above.

\subsection{Classification for $p=3$}
The Tate data for $p=3$ also consists of five or seven numbers. The first five (i.e. $\alpha_4, \alpha_6, A_4, A_6,$ and $d$) are defined by   (\ref{TwoOne}) and (\ref{TwoTwo}). 
The next lemma classifies those $E(a_4,a_6)$ that are not 3-minimal, and its proof 
defines invariants $s$ and $t$ in some of the cases where further invariants are required.

\begin{lemma}\label{Minimalp3}  The curve
$E(a_4, a_6)$
 is not 3-minimal if and only if  $(\alpha_4, \alpha_6, d)$ satisfies one of the following conditions:
\begin{enumerate}
\item We have that $\alpha_4\geq4$ and $\alpha_6\geq 6.$
\item We have that $\alpha_4=\alpha_6=3$ and $d\geq 12$. 
\end{enumerate}
\end{lemma}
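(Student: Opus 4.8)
The plan is to adapt the argument of Lemma~\ref{Minimalp5}, keeping careful track of the extra factors of $3$ contributed by the $27$ in $\Delta=-16(4a_4^3+27a_6^2)$ and by the $864$ in $c_6$. Throughout I would work with the standard invariants $c_4=-48a_4$ and $c_6=-864a_6$, so that $v_3(c_4)=1+\alpha_4$ and $v_3(c_6)=3+\alpha_6$, together with the transformation law $c_4=u^4c_4^{\min}$, $c_6=u^6c_6^{\min}$, $\Delta=u^{12}\Delta^{\min}$ under an admissible change of variables. For the necessity direction, non-minimality at $3$ means such a change exists with $v_3(u)\geq1$, which immediately forces $v_3(c_4)\geq4$, $v_3(c_6)\geq6$, and $d\geq12$; equivalently $\alpha_4\geq3$, $\alpha_6\geq3$, and $d\geq12$.

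I would then pin down the admissible $(\alpha_4,\alpha_6)$ by analysing $d=v_3(4a_4^3+27a_6^2)$ via $v_3(4a_4^3)=3\alpha_4$ and $v_3(27a_6^2)=3+2\alpha_6$, exactly as in the cancellation discussion of Lemma~\ref{Minimalp5}. If these valuations differ then $d=\min(3\alpha_4,3+2\alpha_6)$, and $d\geq12$ forces $\alpha_4\geq4$ and $\alpha_6\geq5$; if they agree then $3\alpha_4=3+2\alpha_6$ forces $(\alpha_4,\alpha_6)=(2k+1,3k)$, whose only solution with $\alpha_4=3$ is $(3,3)$ and whose solutions with $k\geq2$ already have $\alpha_4\geq4$ and $\alpha_6\geq6$. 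The only surviving case not covered by (1) or (2) is the borderline family $\alpha_6=5$ (with $\alpha_4\geq4$), which is the genuinely new feature at $p=3$. I would rule it out by the self-contained observation that no integral Weierstrass model over $\Z_3$ has $v_3(c_6)=2$: from $c_6=-b_2^3+36b_2b_4-216b_6$ one sees modulo $9$ that $v_3(c_6)=2$ forces $3\mid b_2$, and then modulo $27$ that $3\mid b_2$ forces $v_3(c_6)\geq3$. Since $\alpha_6=5$ gives $v_3(c_6)=8$, a single non-minimal reduction would produce $v_3(c_6^{\min})=2$, which is impossible; hence these curves are minimal, leaving exactly (1) and (2).

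For the sufficiency direction, case (1) is immediate: when $\alpha_4\geq4$ and $\alpha_6\geq6$ the substitution $(x,y)\mapsto(9x,27y)$ gives the integral model $y^2=x^3+(a_4/81)x+(a_6/729)$ and lowers $v_3(\Delta)$ by $12$. Case (2) is the crux and needs a change of variables to a long model before scaling. Writing $a_4=27A_4$ and $a_6=27A_6$ with $3\nmid A_4A_6$, the hypothesis $d\geq12$ is equivalent to $4A_4^3+A_6^2\equiv0\pmod{27}$, which forces $A_4\equiv2\pmod3$. I would apply $x=9x'+3\rho$, $y=27y'$ and compute that the transformed model is $y'^2=x'^3+\rho x'^2+\tfrac{A_4+\rho^2}{3}x'+\tfrac{A_6+3A_4\rho+\rho^3}{27}$; its coefficients lie in $\Z_3$ provided $3\nmid\rho$ (so that $3\mid A_4+\rho^2$) and $\rho^3+3A_4\rho+A_6\equiv0\pmod{27}$, and then $v_3(\Delta)$ drops by $12$. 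The main obstacle is precisely the solvability of this last cubic congruence by a unit $\rho$. I would handle it by noting that $\rho^3+3A_4\rho+A_6\equiv(\rho+A_6)^3\pmod3$, so there is a triple root $\rho\equiv-A_6$ that is automatically a unit, and that this auxiliary cubic has discriminant $-27(4A_4^3+A_6^2)$, whose $3$-adic valuation is at least $6$ by the hypothesis $d\geq12$; lifting this root from modulo $3$ to modulo $27$ by a Newton-polygon argument calibrated to $v_3(\operatorname{disc})\geq6$ yields the required $\rho$, completing the proof.
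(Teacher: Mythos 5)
Your proposal is correct in substance, and it takes a genuinely different route from the paper's in both directions. For necessity, the paper asserts that non-minimality plus the failure of (1) forces the cancellation $v_3(4a_4^3)=v_3(27a_6^2)<v_3(\Delta)$ and then disposes of the surviving $\alpha_6\in\{0,3\}$ cases by running Tate's algorithm on the shifted model; the no-cancellation family $\alpha_4\geq4$, $\alpha_6=5$ (with $d=12$ or $13$) is not addressed inside the proof of Lemma~\ref{Minimalp3} at all, being only implicitly covered later by Case 10(i) of the $p=3$ classification (type $II^*$, where the algorithm terminates). Your Kraus-style observation that $v_3(c_6)\in\{0,3,4,\dots\}$ for every integral model, so that $v_3(c_6^{\min})=8-6v_3(u)\leq 2$ is impossible (and $v_3(u)\geq2$ would even make it negative), handles exactly this family self-containedly, and is arguably tighter than the paper at the one delicate point of the necessity direction. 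For sufficiency in case (2), the paper works in the opposite order: it produces units $s,t$ with $a_4=-27t^2$ and $a_6=54t^3+3^{d-6}s$, choosing the sign of $t$ via the factorization $4A_4^3+A_6^2=(A_6-2t^3)(A_6+2t^3)$ in which exactly one factor is divisible by $3$ (hence by $3^{d-9}$), and then shifts by $3t$ and rescales; you instead postulate the shift $x=9x'+3\rho$ and reduce everything to the single congruence $\rho^3+3A_4\rho+A_6\equiv 0\pmod{27}$. Both are legitimate, and your reduction cleanly isolates the arithmetic content.

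The one step that is not a proof as written is the ``Newton-polygon lifting'' of the triple root, and in fact lifting is unavailable here: writing $g(\rho)=\rho^3+3A_4\rho+A_6$, at $\rho_0\equiv -A_6\pmod 3$ one has $g'(\rho_0)=3(\rho_0^2+A_4)\equiv 0\pmod 9$, and expanding gives $g(\rho_0+3y)=g(\rho_0)+3y\,g'(\rho_0)+27\rho_0 y^2+27y^3\equiv g(\rho_0)\pmod{27}$, so $g(\rho)\bmod 27$ is \emph{constant} on the class $\rho\equiv -A_6\pmod 3$. Either every unit lift solves your congruence or none does, and no recentering can change this; the hypothesis $d\geq 12$ must be converted directly into $27\mid g(-A_6)$. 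Fortunately that is true: $-A_6\,g(-A_6)=A_6^2\bigl(A_6^2+3A_4-1\bigr)$, and the identity $A_6^2+3A_4-1=(4A_4^3+A_6^2)-(A_4+1)(2A_4-1)^2$ shows the right side is divisible by $27$, since $27\mid 4A_4^3+A_6^2$ by hypothesis while $3\mid A_4+1$ and $3\mid 2A_4-1$ follow from $A_4\equiv 2\pmod 3$. So you may simply take $\rho=-A_6$. Alternatively, Hensel's lemma applies legitimately to the \emph{simple} root producing $t=\sqrt{-A_4}\in\Z_3^{\times}$, and then $g(t)g(-t)=A_6^2-4t^6=4A_4^3+A_6^2\equiv 0\pmod{27}$ with exactly one non-unit factor, so $\rho=\pm t$ works; this is precisely the paper's normalization. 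With either patch your argument is complete.
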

\begin{proof}
Obviously, $E(a_4, a_6)$ is not minimal at $3$ when (1) holds. 
Therefore, to prove the lemma, suppose that $E(a_4,a_6)$ does not satisfy (1) and is not 3-minimal.
Then, we have $v_3(\Delta)\geq 12$ and
\begin{equation}\label{BAD3}
v_3(4a_4^3)=v_3(27a_6^2) < v_3(\Delta),
\end{equation}
due to the necessary cancellation of $3$-adic valuations.  Furthermore, (\ref{BAD3})  implies that
$2\alpha_6+3=3\alpha_4$, which in turn implies that
$3\mid \alpha_6$ and $\alpha_4$ is odd.
Arguing precisely as in the previous subsection, we find that for any curve satisfying (\ref{BAD3}) there are $3$-adic units $s,t\in \Z_3^{\times}$ for which
$$ a_4 = -3^{\alpha_4}t^2  \ \ \ \text{and} \ \ \ a_6=2\cdot 3^{\alpha_6}t^3+3^{d-\alpha_6-3} s.$$
The substitution $x\to x+3^{\alpha_6/3} t$ returns the model
\begin{equation}\label{model1}
y^2=x^3+3^{1+\alpha_6/3} t\cdot x^2+3^{d-\alpha_6-3}s,
\end{equation}
which has the same discriminant. The assumption that $E(a_4,a_6)$ does not satisfy (1) implies that $\alpha_6\in \{0,3\}$. 
For $\alpha_6=0$, the algorithm applied to this model terminates at one of the first $7$ steps, and so the original model is $3$-minimal. However, if $\alpha_4=\alpha_6=3,$ and $d\geq 12,$ we see that (\ref{model1}) is not minimal. The additional substitution $(x,y)\to (9x,27y)$ returns the reduced model 
\begin{equation}\label{Reduced3}
y^2=x^3+tx^2+3^{d-12}s,
\end{equation}
with smaller discriminant $3^{-12}\Delta.$
 This completes the proof of the lemma.
\end{proof}

We also require two further invariants $s$ and $t$ when
 $\alpha_6\in \{0,3\},$ and
$v_3(4a_4^3)\geq v_3(27a_6^2)=v_3(\Delta).$
We note that if $v_3(4a_4^3)= v_3(27a_6^2)=d,$ then  $A_4\equiv 1 \pmod{3}.$ 
A straightforward calculation with Hensel's lemma shows that there is a $3$-adic unit $t\in \Z_3^\times,$ and $s\in \{0, \pm 1\}$ for which $a_6=  3^{\alpha_6}t^3+3^{\alpha_6/3}a_4 t+3^{\alpha_6+1}s.$ 
The substitution $x\to x-3^{\alpha_6/3} t$ gives the new model
\begin{equation}\label{model2}
y^2=x^3-3^{\alpha_6/3+1} t\cdot x^2+(a_4+3^{\frac{2}{3}\alpha_6+1}t^2)\cdot x+3^{\alpha_6+1}s.
\end{equation}
In this situation we shall apply the algorithm to this model.

As in the previous subsection, we reformulate the algorithm for $p=3$ by identifying its steps with
one of a number of disjoint possibilities for $(\alpha_4, \alpha_6, d)$.  Unlike the situation for primes $p\geq 5$, a few further cases  arise due to the fact that $3-$minimal models in short Weierstrass form do not always exist for $E(a_4,a_6)$. {\color{black} If $E(a_4,a_6)$ is minimal it will fall into one of cases (1)-(10). Non-minimal curves satisfying condition (1) of Lemma \ref{Minimalp3} will fall into case (11) and iterate through the algorithm again as in the case where $p\geq 5$, while non-minimal curves satisfying condition (2) of Lemma \ref{Minimalp3} will fall into extra cases designated (1*) or (2*).
} 
We let $\widehat{\delta}_3(K,\tp)$  be the proportion of curves which fall into these cases.

\smallskip
\noindent
{\bf Case 1} ($\boldsymbol{\alpha_4=0}$). 
This case is Kodaira type $I_0$, when we have good reduction at $p=3.$ We have $\alpha_4=0$ and $v_3(\Delta)=0.$ There are $2$ choices for $a_4$ modulo $3$, and so $\delta_3'(I_0,1)=2/3.$

\smallskip
\noindent
{\bf Case 1*} ($\boldsymbol{\alpha_4=\alpha_6=3, \textbf{ and }  d =12}$). 
This case is also for Kodaira type $I_0$, but  only arises in situations where the original $E$ is not $3$-minimal. Therefore, we employ model (\ref{Reduced3}) per the discussion above.
Namely, we have $a_4=-3^3t^2$, and $a_6=2\cdot 3^3t^3+3^6s$, with $3\nmid st,$ and so the  change of variable reduces $E$ to the $3$-minimal model  $y^2=x^3+tx^2+s,$ which has discriminant $-16(4t^3s+27s^2)\not \equiv 0 \pmod{3}.$ There are $18$  choices for $t$ modulo $27$ with $t\not \equiv 0 \pmod{3}$, and for each choice of $t$, there are $2$ choices of $s$ modulo $3$. Together, these determine $a_4$ modulo $3^6$ and $a_6$ modulo $3^7$
Combining these observations, we obtain $\widehat \delta_3(I_0,1)=4/3^{11}.$

\smallskip
\noindent
{\bf Case 2} ({\bf None}).
 This case is for Kodaira types $I_{n\geq 1}.$   Since $(x+c)^3=x^3+3cx^2+\dots,$ the substitutions $x\to x+c,$ with $c\in \Z$, always produces models with $3\mid b_2,$ where $b_2:=\underbar{a}_1^2+4\underbar{a}_2$
 for long models  as in (\ref{LongModel}).
 Tate's algorithm, which employs $b_2$, bypasses these cases for short models when $p=3$, and so
 we have $\delta_3'(I_{n\geq 1},c_3)=0.$

\smallskip
\noindent
{\bf Case 2*} ($\boldsymbol{\alpha_4=\alpha_6=3 \textbf{ and } d >12}$). This case concerns Kodaira types $I_{n\geq 1}$, when $E$ is not $3-$minimal. Following the discussion above, we employ (\ref{Reduced3}), where  $a_4=-3^3t^2$ and $a_6=2\cdot 3^3t^3+3^{d-6}s,$ with $3\nmid st$. The new model $y^2=x^3+tx^2+3^{d-12}s$ is $3$-minimal, and has discriminant $-16(4t^33^{d-12}s+3^{2d-24}s^2) \equiv 0 \pmod{3^{d-12}}.$ 
Tate's algorithm gives type $I_n$, with $n=d-12,$ the $3$-adic valuation of this discriminant. 
Moreover, the Tamagawa number $c_3$ depends on $t$ modulo $3,$ and we find that
$$c_3:=\frac{n\left(1+\leg{t}{3}\right)}{2}+\frac{\varepsilon(n)\left(1-\leg{t}{3}\right)}{2}.$$
Here $\varepsilon(n)$ is as in Case 2 of the previous subsection.
If we fix $n\geq 1,$ and $t\pmod{3}$, then $a_4\pmod{3^4}$ is uniquely determined. Furthermore, for each choice of $a_4$, there are $2$ choices of $a_6\pmod{3^{n+7}}$. 
Combining these facts, we obtain $\widehat \delta_3(I_1,1)=4/3^{12},$ $\widehat \delta_3(I_2,2)=4/3^{13},$ and $\widehat \delta_3(I_{n\geq 3},c_3)=2/3^{n+11}.$

\smallskip
\noindent
{\bf Case 3}. This case is Kodaira type $II,$ where $c_3=1.$   The following three possibilities for this case are:
\begin{enumerate}[label=(\roman*)]
\item We have $\alpha_4\geq 1$ and $\alpha_6=1.$
\item We have $\alpha_4\geq 1, \alpha_6=0$, $d=3,$ and $s\neq 0.$ Note that $s$ is from model (\ref{model2}).
\item We have $\alpha_4=1, \alpha_6=0$, and $d=4.$
\end{enumerate}
There are $6$ pairs $(a_4,a_6)$ modulo $9$ satisfying (i), and so their proportion is $6/81.$
For curves satisfying (ii), we use model (\ref{model2}). In this situation we have $a_6=3^{\alpha_6}t^3+3^{\alpha_6/3}a_4 t +3^{\alpha_6 +1}s.$ We have 
$2$ choices for $a_4\equiv 0,3\pmod{9}$, $2$ choices for $t\equiv \pm1  \pmod{3}$, and $2$ choices for $s=\pm 1,$ which together determine $a_6$ modulo $9$. Therefore their proportion is $8/81.$ 
For curves satisfying (iii), we use model (\ref{model1}).  We have $a_4=-3t^2$ and $a_6=2t^3+3s.$ There are $2$ choices each for $t, s\equiv \pm1 \pmod{3},$ which together determine $(a_4,a_6)$ modulo $9$. Therefore, their proportion is $4/81.$
Combining these observations, we obtain $\delta_3'(II,1)=\frac{2}{9}.$

\smallskip
\noindent
{\bf Case 4}.  This case is Kodaira type $III,$ where $c_3=2$. The following two possibilities for this case are:
\begin{enumerate}[label=(\roman*)]
\item We have $\alpha_4=1$ and $\alpha_6\geq 2.$
\item We have $\alpha_4\geq 1, \alpha_6=0$, $d=3,$ and $s= 0.$ Note that $s$ is from model (\ref{model2}).
\end{enumerate}
There are $2$ pairs $(a_4,a_6)$ modulo ${9}$ satisfying (i).
  For curves satisfying (ii), we use model (\ref{model2}). In this situation, we have $2$ choices for $a_4\equiv 0$ or $3\pmod{9}$, and $2$ choices for $t\equiv \pm 1\pmod{3},$ which together determine $a_6$ modulo $9$.  Hence, there are $4$ pairs $(a_4,a_6)$ modulo ${9}$ satisfying (ii).
Overall, the 6 pairs $(a_4,a_6)\in \left\{(3,0),(6,0),(0,\pm 1),(3,\pm 4) \pmod{9}\right\}$, chosen from $81$ possibilities, gives
 $\delta_3'(III,2)=2/27.$

\smallskip
\noindent
{\bf Case 5}.  This case is Kodaira type $IV,$ where $c_3\in \{1, 3\}.$ The following two possibilities for this case are:
\begin{enumerate}[label=(\roman*)]
\item We have $\alpha_4\geq2$ and $\alpha_6= 2.$
\item We have $\alpha_4=1,$ $\alpha_6=0,$ and $d=5.$
\end{enumerate}
For condition (i), Tate's algorithm gives $c_3:=2+\left(\frac{A_6}{3}\right).$ For condition (ii), the algorithm gives $c_3:=2+\left(\frac{s}{3}\right),$ where $s$ corresponds to model (\ref{model1}).

It is simple to determine the proportions of curves in these cases.
In case of (i),
we have $9\mid a_4,$ and 1 choice of $a_6$ modulo $27$ for each $c_3$, giving a proportion of $3^{-5}$ each of the two possible Tamagawa numbers. Curves satisfying (ii) are cases of (\ref{model1}), and so we have $a_4=-3t^2$ and $a_6=2t^3+9s.$ There are $6$ choices $t$ modulo $9,$ and one choice of $s$ modulo $3$ for each $c_3$. These together determine the $6$ choices of $a_4$ and $a_6$ modulo $27$ which fall under this set of conditions for each $c_3$. 
Combining these observations, we obtain $\delta_3'(IV,1)=\delta_3'(IV,3)=1/81.$

\smallskip
\noindent
{\bf Case 6}.
This case is  Kodaira type $I_0^*,$ where $c_3\in\{1,2,4\}.$ The following two possibilities for this case are:
\begin{enumerate}[label=(\roman*)]
\item We have $\alpha_4=2$ and $\alpha_6\geq 3.$
\item We have $\alpha_4=1,$ $\alpha_6=0,$ and $d=6.$
\end{enumerate}
In each case, $c_3$ is $1$ more than the number of roots of the polynomial $P(T)$ defined in (\ref{Def_P}), possibly after a change of variable. 
Assuming (i), we have 
$P(T)= T^3 +A_4T +p^{\alpha_6-3} A_6, $ This polynomial has $1$ root modulo $3$ if $A_4\equiv 1\pmod{3}$, $3$ distinct roots if $A_4\equiv -1\pmod{3}$ and $\alpha_6>3$, and $0$ roots modulo $3$ if $A_4\equiv -1\pmod{3}$ and $\alpha_6=3.$  
For curves satisfying (ii), we  define $P(T)$ using model (\ref{model1}). We have $a_4=-3t^2$ and $a_6=2t^3+27 s,$ where $s$ and $t$ are $3$-adic units, and 
$P(T)= T^3 +tT^2 + s. $ This polynomial has $1$ root modulo $3$ if $t\equiv s \pmod{3},$ and $0$ roots modulo $3$ if $t\not \equiv s \pmod{3}.$ 
Each $(a_4,a_6) \pmod{81}$ corresponds to a pair $t$ modulo ${27}$ and $s$ modulo $3$. 
Therefore, we have that $c_p=1$ for $24$ pairs of $(a_4,a_6)$ modulo ${81}$ ($6$ pairs satisfying (i), and $18$ satisfying (ii))  $c_p=2$ for $27$ pairs of $(a_4,a_6)$ modulo $81$ ($9$ pairs satisfying (i), and $18$ satisfying (ii)), and $c_p=4$ for $3$ pairs of $(a_4,a_6)$ modulo $81$ (all satisfying condition (i)). 
Hence, we have $\delta_3'(I_0^*,1)=8/3^7,$ $\delta_3'(I_0^*,2)=1/3^5,$ and $\delta_3'(I_0^*,4)=1/3^7.$

\smallskip
\noindent
{\bf Case 7} ({$\boldsymbol{\alpha_4\geq 1,\alpha_6=0 \textbf{ and } {\color{black}d\geq7}}$).}
This case is Kodaira type $I^*_{n\geq 1 },$ with $c_3\in\{2,4\},$ where $0<n:=d-6.$ We use model (\ref{model1}), and define the auxiliary polynomials $P(T)$ and $R(Y)$ as in (\ref{Def_P}) and (\ref{Def_R}).  We have that $P(T)= T^3 +tT^2 + 3^{n}s $ and  $R(Y)=Y^2-3^{n-1}s$. 

If $d$ is odd, then we have 
\[3^{-n+1}R(3^{\frac{n-1}{2}}Y)\equiv Y^2-s \pmod{3},\]
which follows from the application of the sub-procedure in Step 7 of the  algorithm $n$ times. We have $c_3:=4$ when $s\equiv 1\pmod{3}$ (so that $R(Y)$ factors over $\Z_3$), and
$c_3:=2$ when $s=-1 \pmod{3}$ (so that $R(Y)$ does not factor).

If $d$ is even, then we have that 
$3^{-n}P(3^{\frac{n}{2}}T)\equiv tT^2+s\pmod{3}$, which  
corresponds to $n$ steps through the sub-procedure of Step 7 of Tate's Algorithm. We have $c_3:=4$ if $s\equiv -t\pmod{3}$ (so that $P(T)$ factors completely), and $c_3:=2$ if $s\equiv t\pmod{3}$ (so that $P(T)$ does not factor completely).

For each $n\geq 1$ and each $c_3\in \{2,4\}$, we have $2\cdot 3^{n+2}$ choices for $t$ modulo $3^{n+3}$ and $1$ choice of $s$ modulo $3$ (depending on $c_3$ and $t$). This determines 
$2\cdot 3^{n+2}$ pairs $(a_4,a_6)$ modulo $3^{n+4}$ corresponding to Kodaira type $I^*_{n}$ with the chosen Tamagawa number $c_3,$ out of a total of $3^{2n+8}$ pairs modulo $3^{n+4}.$ Hence, Tate's algorithm gives $\delta_3'(I_n^*,2)= 2/3^{n+6},$ and $\delta_3'(I_n^*,4)=2/3^{n+6}.$

\smallskip
\noindent
{\bf Case 8}. This case is Kodaira type $IV^*$, where $c_3\in \{1, 3\}.$ The following two possibilities for this case are:
\begin{enumerate}[label=(\roman*)]
\item We have $\alpha_4\geq3$ and $\alpha_6=4.$
\item We have $\alpha_4\geq 3, \alpha_6=3$, $d=9,$ and $s\neq 0.$ Note that $s$ is from model (\ref{model2}).
\end{enumerate}
Tate's algorithm gives $c_3:=2+\left(\frac{A_6}{3}\right)$ under condition (i), and $c_3:=2+\left(\frac{s}{3}\right)$ under condition (ii). For condition (i), we have $1$ choice for $a_4$ modulo $27,$ and one choice for $a_6$ modulo $3^{5}$ for each of the two possible values of $c_3$ (i.e. a proportion of $1/3^8$). For condition (ii), there are $2$ choices for  $a_4\equiv 0 $ or $27 \pmod{3^4},$ $2$ choices for $t\not \equiv 0 \pmod 3,$ and $1$ choice of $s\neq0$ for each $c_3.$ Together, these determines $a_6$ modulo $3^5.$ These pairs $(a_4,a_6)$ occur with proportion $4/3^{9}$ for each of the two possible values for $c_3.$ Therefore, we obtain $\delta_3'(IV^*,1)=\delta_3'(IV^*,3)=7/3^9.$

\smallskip
\noindent
{\bf Case 9}.  This case is Kodaira type $III^*,$ where $c_3=2.$ 
The following three possibilities for this case are:
\begin{enumerate}[label=(\roman*)]
\item We have $\alpha_4=3$ and $\alpha_6\geq 5.$
\item We have $\alpha_4\geq 3, \alpha_6=3$, $d=9,$ and $s= 0.$ Note that $s$ is from model (\ref{model2}).
\item We have $\alpha_4=\alpha_6=3$ and $d=10.$
\end{enumerate}
The proportion of curves satisfying (i) is $2/3^{9}$.
 For curves satisfying (ii), we use (\ref{model2}). In this situation, we find that  $a_4\equiv 0, 27 \pmod{3^4}$. Thus, there are 2 choices of $a_4$ modulo $3^4,$ and 2 choices for $t \equiv \pm1 \pmod 3.$ These choices determine $a_6$ modulo $3^5.$ Therefore, the proportion of curves satisfying this set of conditions is $4/3^{9}.$
For curves satisfying (iii), we use (\ref{model1}). In this situation, we have $a_4=-27t^2$ and $a_6=54t^3+81s.$ There are $2$ choices each for $t, s\equiv \pm 1 \pmod{3}.$ These choices together determine $a_4$ modulo $3^4$ and $a_6$ modulo $3^5$. Therefore, the proportion of these curves is $4/3^9.$
Combining these observations, we obtain $\delta_3'(III^*,2)=10/3^9.$

\smallskip
\noindent
{\bf Case 10}.
 This case is Kodaira type $II^*,$ where $c_3=1.$ 
 The following two possibilities for this case are:
\begin{enumerate}[label=(\roman*)]
\item We have $\alpha_4\geq 4$ and $\alpha_6= 5.$
\item We have $\alpha_4=\alpha_6=3$, and $d=11.$
\end{enumerate}
 The proportion of curves satisfying (i) is $2/3^{10}.$
 For curves satisfying (ii), we use model (\ref{model1}). In this situation, there are $6$ choices for $t$ modulo $9$ so that $t\neq 0 \pmod{3},$ and $2$ choices for $s \equiv \pm 1 \pmod{3}.$ These choices together determine $a_4$ modulo $3^5$ and $a_6$ modulo $3^6.$ Therefore, the proportion of pairs $(a_4,a_6)$ satisfying these conditions is $4/3^{10}$
Combining these observations, we obtain  $\delta_3'(II^*,1)=2/3^{9}.$

\smallskip
\noindent
{\bf Case 11} ($\boldsymbol{\alpha_4\geq 4 \textbf{ and } \alpha_6\geq 6}$). As the model is not minimal,  the algorithm replaces $a_4$ and $a_6$ with $a_4/3^{4}$ and $a_6/3^{6}$ respectively. One repeats these substitutions until one obtains a model which is one of the ten cases above.

\subsection{Classification for $p=2$}\label{TwoClassification}
The Tate data for $p=2$ also consists of five or seven numbers. The first five (i.e. $\alpha_4, \alpha_6, A_4, A_6,$ and $d$) are defined by   (\ref{TwoOne}) and (\ref{TwoTwo}). 
The next lemma classifies those $E(a_4,a_6)$ that are not 2-minimal, and its proof 
defines invariants $s$ and $t$ in some of the cases where these invariants are required. 

\begin{lemma}\label{Minimalp2}  A curve
$E(a_4, a_6)$
 is not 2-minimal if and only if  $(a_4, a_6, d)$ satisfies one of the following conditions:
\begin{enumerate}
\item We have that $\alpha_4\geq4$ and $\alpha_6\geq 6.$
\item We have that $\alpha_4\geq 4$, $a_6\equiv 16 \pmod{64}$.
\item We have $(a_4,a_6)\equiv (5,6)\pmod{8}$, where $d\geq12$
\end{enumerate}
\end{lemma}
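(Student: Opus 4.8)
The plan is to characterize $2$-non-minimality exactly as in Lemmas~\ref{Minimalp5} and \ref{Minimalp3}, through the reduction step of Tate's algorithm, but accounting for the fact that at $p=2$ the reducing change of variable generically passes through a long Weierstrass model: it completes the square and introduces nonzero $\underbar{a}_1,\underbar{a}_3$. Concretely, $E(a_4,a_6)$ is non-minimal at $2$ if and only if some admissible substitution $(x,y)\mapsto(4x+r,\,8y+4sx+w)$ with $r,s,w\in\Z_2$ returns a $2$-integral model; such a substitution lowers $v_2(\Delta)$ by $12$. Expanding the transformation law for a short model, this is the requirement that
\begin{align*}
&r\equiv -s^2 \pmod 4, \qquad w\equiv 0\pmod 4,\\
&a_4+3r^2-2sw\equiv 0 \pmod{16}, \qquad a_6+ra_4+r^3-w^2\equiv 0\pmod{64}
\end{align*}
be solvable over $\Z_2$. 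Since any reduction lowers $v_2(\Delta)$ by $12$, non-minimality forces $v_2(\Delta)\geq 12$; as $a_6$ odd would give $v_2(\Delta)=4$, we must have $a_6$ even, so $\alpha_6\geq 1$. Once $a_6$ is even one has the convenient identity $\Delta=-64\big(a_4^3+27(a_6/2)^2\big)$, whence $v_2(\Delta)\geq 12$ is equivalent to $a_4^3+27(a_6/2)^2\equiv 0\pmod{64}$.

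First I would dispatch the forward direction by exhibiting a reducing substitution in each case. Case (1), $\alpha_4\geq 4$ and $\alpha_6\geq 6$, is the substitution $(x,y)\mapsto(4x,8y)$ of Lemma~\ref{Minimalp5}. For case (2), $\alpha_4\geq 4$ with $a_6\equiv 16\pmod{64}$, and case (3), $(a_4,a_6)\equiv(5,6)\pmod 8$ with $d\geq 12$, I would solve the displayed system using an odd choice of $s$ (so that the completed-square model acquires an odd $\underbar{a}_1$); the stated congruences on $a_4,a_6$ are precisely what render the last two divisibilities solvable, while the hypothesis $d\geq12$ (equivalently $a_4^3+27(a_6/2)^2\equiv 0\pmod{64}$) guarantees that the discriminant genuinely drops.

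For the converse I would run the implication backwards. Solvability of the system forces $a_6$ even and $v_2(\Delta)\geq 12$, after which a case analysis on $\alpha_4$ and on the residue of $a_6$ modulo $32$ and $64$ sorts every solvable instance into exactly one of (1), (2), (3). The cleanest bookkeeping is the Laska--Kraus realizability criterion applied to the reduced invariants $(c_4',c_6')=(-3a_4,\,-27a_6/2)$, whose three $2$-adic sub-cases---$c_6'\equiv -1\pmod 4$; then $v_2(c_4')\geq 4$ with $c_6'\equiv 8\pmod{32}$; then $v_2(c_4')\geq 4$ with $c_6'\equiv 0\pmod{32}$---translate term by term into conditions (3), (2), (1). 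In particular $\alpha_4\in\{1,2,3\}$ is never realizable and hence always minimal, which is why no further case appears; and in case (3) the congruence $a_4\equiv 5\pmod 8$ is not imposed by hand but forced, since reducing $a_4^3+27(a_6/2)^2\equiv 0\pmod 8$ with $a_6/2$ odd gives $a_4\equiv -3\equiv 5\pmod 8$.

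I expect the main obstacle to be this converse, specifically verifying that the residue classes modulo $8$, $32$, and $64$ appearing in (1)--(3) are simultaneously necessary and sufficient for solvability. Unlike the $p\geq 3$ cases there need be no cancellation of leading $2$-adic valuations in $4a_4^3+27a_6^2$---in case (2) one has $v_2(27a_6^2)=8<v_2(4a_4^3)$, so $v_2(\Delta)=12$ with no cancellation at all---so the valuation-matching shortcut of Lemmas~\ref{Minimalp5} and~\ref{Minimalp3} is unavailable, and one must instead track enough $2$-adic digits directly. The delicate point to state carefully is that in case (3) the mod-$8$ congruences alone do not force $v_2(\Delta)\geq 12$, so the hypothesis $d\geq 12$ must be retained as an independent condition.
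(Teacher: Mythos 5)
Your proposal is correct in substance, but it takes a genuinely different route from the paper. The paper proves Lemma~\ref{Minimalp2} by walking through Tate's algorithm directly: it notes non-minimality forces $v_2(\Delta)\geq 12$, splits into the valuation-cancellation case (\ref{BAD2}) versus $\alpha_4\geq 2$, $\alpha_6\geq 4$, checks that the intermediate configurations (e.g.\ $\alpha_4\in\{2,3\}$ with $\alpha_6\geq 4$, or $\alpha_4\geq 4$ with $a_6\equiv 32,48\pmod{64}$) terminate at Steps 7, 8 or 10 and hence are minimal, and exhibits the explicit reducing substitutions $(x,y)\to(4x,8y+4)$ and $(x,y)\to(4x+t,8y+4x)$ yielding the models (\ref{Reduced2.1}) and (\ref{Reduced2.2}). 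You instead characterize non-minimality by solvability of the integrality congruences for a general $u=2$ change of variables (your displayed system is the correct specialization of the transformation law to $a_1=a_2=a_3=0$), equivalently by the Laska--Kraus realizability criterion for $(c_4/2^4,c_6/2^6)=(-3a_4,-27a_6/2)$. The three $2$-adic sub-cases of Kraus's criterion at $2$ do translate exactly as you say: $c_6'\equiv -1\pmod 4$ unwinds to $a_6\equiv 6\pmod 8$ (with $a_4\equiv 5\pmod 8$ then forced by $d\geq 12$, matching the paper's computation $A_4+3\equiv 0\pmod 8$); $v_2(c_4')\geq 4$ with $c_6'\equiv 8\pmod{32}$ unwinds to $\alpha_4\geq 4$, $a_6\equiv 16\pmod{64}$; and $c_6'\equiv 0\pmod{32}$ to $\alpha_6\geq 6$. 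Your observations that $d\geq 12$ is automatic in cases (1)--(2) but must be imposed independently in case (3), and that $\alpha_4\in\{1,2,3\}$ always gives minimality, are also right. Your route is arguably the cleaner pure minimality criterion; the paper's route earns its keep by producing, along the way, the reduced models and termination-step data that Subsection~\ref{TwoClassification} and the $\widehat{\delta}_2(K,\tp)$ computations reuse.

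One detail in your forward direction is wrong: you propose solving the system with an odd choice of $s$ in both cases (2) and (3). In case (2), odd $s$ forces $r\equiv -s^2\equiv 3\pmod 4$, hence $r$ odd, making $a_4+3r^2-2sw$ odd while $a_4\equiv 0\pmod{16}$, so the congruence modulo $16$ is unsolvable; there you must take $s$ even (e.g.\ $r=s=0$, $w=4$), and the reduced model acquires odd $\underbar{a}_3$ (the paper's $y^2+y=x^3+\cdots$ of (\ref{Reduced2.1})), not odd $\underbar{a}_1$. Odd $s$ pertains only to case (3). This slip is local and easily repaired, and the converse direction, which carries the real content of the lemma, is unaffected.
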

\begin{proof}
Obviously, $E(a_4, a_6)$ is not minimal at $2$ when (1) holds. 
Now, suppose that $E(a_4,a_6)$ does not satisfy (1) and is not 2-minimal.
Then, we have $v_2(\Delta)\geq 12$. This implies that either $a_4\geq 2$ and $a_6\geq4,$ or 
\begin{equation}\label{BAD2}
v_2(4a_4^3)=v_2(27a_6^2) < v_2(\Delta),
\end{equation}
due to the necessary cancellation of $2$-adic valuations. If $\alpha_4 =2$ (resp. $3$) and $\alpha_6\geq 4,$ then we find that Tate's algorithm terminates in Step 7 (resp. Step 8). If $\alpha_4\geq 4$ and $a_6 \equiv 48$ or $32\pmod{64},$ then Tate's algorithm terminates at Step 10. However, if $a_6\equiv 16\pmod{64},$  then we have $a_4=4A_4$ $a_6=16+2^ks,$ where either $s=0$ or $s$ odd, and $k\geq6.$ The substitution $(x,y)\to (4x, 8y+4),$ reduces the equation of the curve to 
\begin{equation}\label{Reduced2.1}
 y^2+y=x^3 +A_4x+2^{k-6}s,
 \end{equation}
which has discriminant $2^{-12}\Delta$.  Since $\alpha_6=4$, we have $v_2(\Delta)=12$, and so this model is $2$-minimal, giving
(2).

 If $E(a_4,a_6)$ satisfies (\ref{BAD2}), then $2\alpha_6=3\alpha_4+2$, which in turn implies that
$\alpha_4$ is even. We find that $\alpha_4\in \{0,2\},$ however the possible case that $\alpha_4 =2$ and $\alpha_6=4$ was already considered above. If $\alpha_4=0,$ then $\alpha_6=1.$ For any curve of this type satisfying (\ref{BAD2}) with $v_2(\Delta)\geq 12,$ we find $-2^{-6}\Delta=A_4^3+27 A_6^2\equiv A_4+3 \equiv 0\pmod{8}.$ Thus, $a_4\equiv -3\pmod{8},$ and there is some $t\in \Z_2$ so that $a_4=-3t^2$. Moreover, there is a choice of sign of $t$ so that $a_6=2t^3+2^{d-6} s,$ for some $2$-adic unit $s$. If $t\equiv 1\pmod{4}$ (so that $a_6\equiv 2\pmod{8}$), then we find that Tate's algorithm terminates in either step $6$ or $7$. However if $t\equiv 3\pmod{4}$ (so that $a_6\equiv 6\pmod{8}),$ then the substitution $(x,y)\to (4x+t,8y+4x)$ reduces the equation of the curve. 
\begin{equation}\label{Reduced2.2}
y^2+xy=x^3 +\frac{3t-1}{4}x^2+2^{d-12}s
 \end{equation}
This completes the proof of the lemma, as this situation is case (3).
 \end{proof}

Our analysis of Tate's algorithm requires invariants $k, s,t$, and $v$ whenever (\ref{BAD2}) is satisfied with $\alpha_4=0$ and $v_2(\Delta)\geq 8.$ As in the proof above, this implies that $(a_4,a_6)\equiv (1,2)\pmod{4}.$ 
If $d$ is even, we set $v=2^{\frac{d-6}{2}},$ and otherwise we set $v=0.$ Then there is
a unique $t\in \Z_2$ with $t\equiv a_6/2 \pmod{4},$ so that $a_4=-3t^2+2v$. This uses the fact that   $d=8$ if and only if $a_4\equiv 1\pmod{8}.$ A short calculation gives numbers $s$ and $k,$ where $a_6=2t^3-2vt+v^2+ 2^ks,$ with either $s=0,$ or  $s$ is odd and 
$k\geq d-6.$ We find that $k>d-6$ if $d$ is even (so that $v\neq 0$), and $k=d-6$ if $d$ is odd (so that $v=0$).
After the substitution $(x,y)\to (x+t,y+x+v)$, we obtain
\begin{equation}\label{Case2exceptional}
 y^2+2xy+2vy=x^3+(3t-1)x^2+2^k s.
\end{equation} 

As in the previous subsections, we reformulate the algorithm for $p=2$ by identifying its steps with
suitable disjoint possibilities for the Tate data $a_4,a_6, \alpha_4, \alpha_6, d, $ and $k$. Unlike the case where $p\geq 5$, a further cases  arise due to the fact that $2-$minimal models in short Weierstrass form do not always exist for $E$. {\color{black} If $E(a_4,a_6)$ is minimal it will fall into one of cases (1)-(10). Non-minimal curves satisfying condition (1) of Lemma \ref{Minimalp2} will fall into case (11) and iterate through the algorithm again as in the case where $p\geq 5$, while non-minimal curves satisfying conditions (2) or (3) of Lemma \ref{Minimalp2} will fall into extra cases designated (1*) or (2*).
} 
We define $\widehat{\delta}_2(K,\tp)$ to be the proportion of curves which fall into these cases.

\smallskip
\noindent
{\bf Case 1} ({\bf None}). 
This case is Kodaira type $I_0$, which has good reduction at $p=2.$ This case does not occur in a first pass through the algorithm since $16\mid \Delta$.
Therefore, we have $\delta_2'(I_0,1)=0.$

\smallskip
\noindent 
{\bf Case 1*}. This is Kodaira type $I_0$ with $c_2=1,$ where $E$ is not $2$-minimal. The two possibilities for this case are:
\begin{enumerate}[label=(\roman*)]
\item We have $\alpha_4\geq 4$ and $a_6\equiv16\pmod{64}.$
\item We have $(a_4,a_6)\equiv(5,6)\pmod8,$ and $d=12.$
\end{enumerate}
Under the condition (i), we use the model (\ref{Reduced2.1}). After making the substitution $(x,y)\to (4x, 8y+4),$ we obtain
\[ y^2+y=x^3 +A_4x+2^{k-6} s.\]
The new discriminant $\Delta/64=-64A_4^3-27(1+2^{k-2}s)^2$ is odd. The proportion of these curves is $1/2^{10}.$

Under the condition (ii), we use the model (\ref{Reduced2.2}). We have $a_4=-3t^2,$ with $t\equiv 3\pmod{4},$ and $a_6\equiv 2t^3+64s.$ After making the substitution $(x,y)\to (4x+t, 8y+4x),$ we obtain
\[ y^2+xy=x^3 +\frac{3t-1}{4}x^2+s.\]
  The discriminant of this model is $\Delta/64=-27(s\,t^3+16 s^2),$ which is odd. 
We may take any choice of $a_4\equiv 5\pmod{8}$. This choice determines $t$, and therefore determines $a_6\pmod{128}.$ The proportion of curves satisfying this situation is $1/2^{10}.$
Therefore, Tate's algorithm gives $\widehat \delta_2(I_0,1)=1/1024+1/1024=1/512.$

\smallskip
\noindent
{\bf Case 2} ({\bf{None}}). This case  is for Kodaira types $I_{n}.$  There are no short form curves in this case as in Case 2 for $p=3$ (i.e. $2\mid b_2$ because $b_2\equiv \underbar{a}_1^2\pmod 4$). Therefore, we have $\delta_2'(I_{n\geq1},c_2)=0.$

\smallskip
\noindent 
{\bf Case 2*} ($\boldsymbol{(a_4,a_6)\equiv(5,6)\pmod8, d>12}$). This case is for the Kodaira type $I_{n\geq 1}$, with $n=d-12$, where $E$ is not $2$-minimal. 
We use model (\ref{Reduced2.2}). We have $a_4=-3t^2$ and $a_6=2t^3+2^{d-6}s,$  where $t\equiv 3\pmod{4}.$  After making the substitution $(x,y)\to (4x+t, 8y+4x),$ we obtain
\[ y^2+xy=x^3 +\frac{3t-1}{4}x^2+2^{d-12}s.\]
 The discriminant is $\Delta/64=-27(2^{d-12}s\,t^3+2^{2d-20} s^2).$ Tate's algorithm gives $c_2\in \{1, 2, n\},$ depending on the polynomial $T^2+T+\frac{3t-1}{4}$ modulo $2$. We have  $c_2:=n$ if it has roots modulo $2$ (i.e if $t\equiv 3\pmod{8}$). If it does not have roots modulo $2$ (i.e. if $t\equiv 5\pmod{8}$), then $c_2:=1$ (resp. $c_2:=2$) if $n$ is odd (resp.  even).
 
  Therefore to compute the proportions, for any  $n$ we may take any $a_4\equiv 5\pmod{8},$ which determines $t\in \Z_2,$ with $t\equiv 3 \pmod{4}.$ This then determines $a_6\equiv 2t^3+2^{n+6}\pmod{2^{n+7}}.$ Using $\varepsilon(n)$ as in Case 2 of Subsection~\ref{TateP5},
the algorithm gives that 
$\delta'_2(I_{1},1)=1/2^{11},$ $\delta'_2(I_{2},2)=1/2^{12},$ and $\delta'_2(I_{n\geq3},n)=\delta'_2(I_{n\geq 3},\varepsilon(n))= 1/2^{11+n}.$

\smallskip
\noindent
{\bf Case 3} ($\boldsymbol{ (a_4,a_6)\in\{ (0,2),(0,3), (1,0), (1,1), (2,2),(2,3), (3,2), (3,3)\pmod 4\}}$).
  This case is Kodaira type $II,$ where $c_2=1.$ A
 brute force analysis shows that these cases  correspond only to the indicated congruence conditions.
 As these account for $8$ out of the $16$ possible pairs $(a_4,a_6)$ modulo $4,$ we obtain
$\delta_2'(II,1)=1/2.$

\smallskip
\noindent
{\bf Case 4} ($\boldsymbol{ (a_4,a_6)\in\{ (1,3), (2,1), (2,0), (3,0)\pmod4\}} $).
  This case is Kodaira type $III,$ where $c_2=2.$ 
  A brute force analysis shows that these cases  correspond only to the indicated congruence conditions.
  As these account for $4$ out of the $16$ possible pairs $(a_4,a_6)$ modulo $4,$ we obtain
$\delta_2'(III,2)=1/4.$

\smallskip
\noindent
{\bf Case 5} ($\boldsymbol{(a_4,a_6)\in\{ (0,1), (3,1)\pmod4\}}$). 
 This case is for Kodaira type $IV$, where $c_2\in \{1, 3\},$ depending on the parity of $(a_6+a_4t+t^3-v^2)/4$. The algorithm gives $c_2:=3$  (resp. $c_2:=1$) if this number is even (resp.  odd).  
By brute force, we find that $c_2=1$ for 
\[(a_4,a_6)\pmod{8}\in\{ (0,5), (3,1), (4,5),(7,5)\pmod 8\},\]
and $c_2=3$ for 
\[(a_4,a_6)\pmod{8}\in\{ (0,1), (3,5), (4,1),(7,1)\pmod 8\}.\]
Therefore, we obtain $\delta_2'(IV,1)=1/16,$ and $\delta_2'(IV,3)=1/16.$

\smallskip
\noindent
{\bf Case 6}. 
This case is for Kodaira type $I_0^*$, where $c_2\in \{1,2\}.$ The following two possibilities for this case are:
\begin{enumerate}[label=(\roman*)]
\item We have $\alpha_4\geq 2$ and $a_6\equiv8,12\pmod{16}.$
\item We have $(a_4,a_6)\equiv(1,2)\pmod4,$ and $k=3,$ where $k$ is defined as in (\ref{Case2exceptional}).
\end{enumerate}
Assuming (i), there are numbers $s$ and $v$ for which $a_6=8s+v^2$, where $s$ is odd and $v=0$ (resp. $v=2$) when $a_6\equiv 8\pmod{16}$ (resp. when $a_6\equiv 12\pmod{16}$). After making the substitution $(x,y)\to(x,y+v),$
we obtain $y^2+2vy=x^3+a_4x+8s.$
Using this model, we define (as in (\ref{Def_P}))
\[P(T)=T^3+\frac{1}{4}a_4 T +s.\]
If $a_4\equiv 4\pmod{8},$ then $P(T)$ is irreducible modulo $2$ and so $c_2:=1.$  If $a_4\equiv 0 \pmod{8},$ then $P(T)$ has one root modulo $2$, and so $c_2:=2$. Therefore the contributions from condition (i) to $\delta_2'(I_0^*,1)$ and $\delta_2'(I_0^*,2)$ are both $1/64.$

For curves satisfying (ii), we use (\ref{Case2exceptional}) to define
\[P(T)=T^3+\frac12(3t-1)T^2+s,\]
and Tate's algorithm gives $c_2:=1$ when $t\equiv 1\pmod 4,$ and $c_2:=2$ when $t\equiv 3\pmod{4}.$ Since $k=3,$ we have $v=0$ or $2$. To calculate the proportion of curves satisfying (ii), we may pick any  $a_4\equiv 1\pmod{4}$. Then we have $v=0$ when $a_4\equiv 1\pmod{8},$ and $v=2$ otherwise. The choice of $a_4$ and $c_2$ fixes $t$ uniquely, which then determines $a_6\equiv 2t^3-2vt+v^2+8\pmod{16}.$ Thus, for each choice of $c_2$, condition (ii) contributes a proportion of $1/64$ to $\delta_2'(I_0^*,c_2).$
  Therefore Tate's algorithm gives $\delta_2'(I_0^*,1)=\delta_2'(I_0^*,2)=1/64+1/64=1/32.$

\smallskip
\noindent
{\bf Case 7}.  
This case is for Kodaira type $I^*_{n\geq 1 },$ where $c_2\in \{2, 4\}.$  The only possibilities for $(\alpha_4,\alpha_6)$ are:
\begin{enumerate}[label=(\roman*)]
\item We have $\alpha_4= 2$ and $a_6\equiv0,4\pmod{16}.$
\item We have $a_4\equiv1\pmod4,$ $a_6\equiv2\pmod8,$ and $k\geq4,$ where $k$ is defined in (\ref{Case2exceptional}).
\end{enumerate}
Instead of proceeding as in the previous cases, we determine the conditions which result in any given choice of $n$ and $c_2\in \{2, 4\}.$  We are essentially working backwards through iterations of the sub-procedure in Step 7 of the algorithm. In particular, $n$
is the number of iterations required. As illustrated in the two previous subsections, this step of the algorithm makes use of two auxiliary polynomials,  $P(T)$ and $R(Y)$, which are defined by  (\ref{Def_P}) and (\ref{Def_R}), from a long model
 (\ref{LongModel}) with  $2\mid \underbar a_1$, $\underbar a_2\equiv 2\pmod{4},$ $4\mid \underbar a_3$, $8\mid \underbar a_4$ and $16\mid \underbar a_6.$ 
 
Suppose that $n=2a+1$ is odd. Then the sub-procedure finds a model for which $$2^{-2a}R(2^{a}Y)\equiv Y^2+Y\ {\text {\rm or }}\ Y^2+Y+1\pmod 2,
$$
and also satisfies
$$2^{-2a}P(2^{a}T)\equiv \begin{cases} T^2\pmod{2} \ \ \ \ &{\text {\rm if $a>0$,}}\\
T^3+T^2\pmod{2} \ \ \ \ &{\text {\rm if $a=0$}}.
\end{cases}
$$ 
The point here is the $2^{-2a}P(2^aT)\pmod 2$ has a double root at $T=0.$

These conditions are equivalent to the existence of $A,B, C\in \Z,$ with $A$ odd, so that $\underbar a_3=2^{a+2}A,$ $\underbar a_4=2^{a+3}B,$ and $\underbar a_6=2^{2a+4}C$. If $C$ is even, then we see that $R(Y)$ factors over $\Z_2$, and so the algorithm gives $c_2:=4.$ Otherwise, we have $c_2:=2.$
The substitutions $y\to y+2x$ and $y\to y+2^{a+2}$ do not alter any of the required conditions on $P(T)$ and $R(Y)$. Therefore, we may assume without loss of generality $\underbar a_1=2u$ with $u\in \{0,1\},$ and $A=1$. Similarly, the substitution $x\to x+2^{a+2}$ does not alter the required conditions, and so we may assume that $\underbar a_2=3t_0-u^2$ for some $t_0\equiv 1\pmod{4}$ (if $u=1$) or $t_0\equiv 2\pmod{4}$ (if $u=0$), and $0<t_0<2^{a+2}.$ Then the substitution $(x,y)\to (x-t_0, y-ux-2^{a+2})$ returns the equation of the curve to Weierstrass short form,
$y^2=x^3+a_4x+a_6,$ where we see that $a_4=-3t_0^2+2^{a+2}u+2^{a+2}B$, and $a_6=-t_0^3-t_0 a_4 +2^{2a+2}+2^{2a+4}C.$ We therefore have $2$ choices for $u$ and $2^{a}$ choices for $t$ depending on $u$. This determines $a_4\pmod{2^{a+3}}.$ Together with $c_2,$ this determines $a_6\pmod{2^{2a+5}}.$ Thus for a fixed odd $n$ and choice of $c_2$, we see that $\delta_2'(I_{n\geq 1},c_2)=2^{a+1}\cdot {1}/{2^{a+3}}\cdot {1}/{2^{2a+5}}={1}/{2^{n+6}}.$

Now suppose that $n=2a$ is even. Then the sub-procedure finds a model for which
$$2^{-2a}P(2^{a}T) \equiv T^2+T\ {\text {\rm or }}\ T^2+T+1\pmod 2,
$$
 and satisfies
$2^{-2a+2}R(2^{a-1}Y)\equiv Y^2\pmod{2}.$ This is equivalent to the existence of integers $A,B,$ and $C,$ with $B$ odd, so that $\underbar a_3=2^{a+2}A,$ $\underbar a_4=2^{a+2}B,$ and $\underbar a_6=2^{2a+3}C$. If $C$ is even, then we see that $R(T)$ factors over $\Z_2$, and the algorithm gives $c_2:=4.$ Otherwise, we have $c_2:=2.$
As before, we note that the substitutions $y\to y+2x$ and $y\to y+2^{a+2}$ do not alter any of the required conditions. Therefore, we may assume without loss of generality $\underbar a_1=2u$ with $u\in \{0,1\},$ and $A=0$. Similarly, the substitution $x\to x+2^{a+2}$ does not alter the conditions, so we may assume that $\underbar a_2=3t_0-u^2$ for some $t_0\equiv 1,2\pmod{4}$ (depending on $u$), and $0<t_0<2^{a+2}.$ After making the substitution $(x,y)\to (x-t_0, y-ux),$ we obtain the short Weierstrass model,
$y^2=x^3+a_4x+a_6,$ where $a_4=-3t_0^2+2^{a+2}B$, and $a_6=-t_0^3-t_0a_4 +2^{2a+3}C.$ We therefore have $2$ choices for $u$ and $2^{a}$ choices for $t_0$ depending on $u$. This determines $a_4\pmod{2^{a+3}}.$ Together with the choice of $c_2,$ this determines $a_6\pmod{2^{2a+4}}.$ Hence, for a fixed odd $n$ and choice of $c_2$, we see that $\delta_2'(I_{n\geq 1},c_2)=2^{a+1}\cdot {1}/{2^{a+3}}\cdot {1}/{2^{2a+4}}={1}/{2^{n+6}}.$ 

A short calculation shows that the case $u=0$ implies that $(a_4,a_6)$ satisfies (i), where as $u=1$ implies that $(a_4,a_6)$ satisfies (ii).
In summary, for each $n\geq 1$ we have $\delta_2'(I_{n\geq 1},2)=\delta_2'(I_{n\geq 1},4)=1/2^{n+6}.$

\smallskip
\noindent
{\bf Case 8}.
This case is for Kodaira type $IV^*$, where $c_2\in \{1, 3\}$. The following two possibilities for this case are:
\begin{enumerate}[label=(\roman*)]
\item We have $\alpha_4\geq 3$ and $a_6\equiv4\pmod{16}.$
\item We have $(a_4,a_6)\equiv(1,6)\pmod8,$ and $k\geq4.$ Note that $k$ is from model (\ref{Case2exceptional}).

\end{enumerate}
For (i), the algorithm implies that $c_2:=1$ if $a_6\equiv 20 \pmod{32},$ and $c_2:=3$  when $a_6\equiv 4 \pmod{32}.$ 
Therefore, (i) contributes a proportion of 1/256 for both $c_2=1$ and $c_2=3.$
For condition (ii), the algorithm implies that $c_2:=1$ if $k=4$ (resp. $c_2:=3$ if $k\geq 5$). In this situation, we have $t\equiv 3\pmod{4},$ and $v=2$. The condition that $k\geq 4$ then implies that  $(a_4,a_6)\equiv (1,6)$ or $(9,14) \pmod{16}.$ Half of each set of possible pairs $(a_4,a_6)$ will correspond to each possible $c_2,$ thereby contributing another proportion of 1/256, and so we obtain
$\delta_2'(IV^*,1)=\delta_2'(IV^*,3)=1/128.$

\smallskip
\noindent
{\bf Case 9}. 
This is for Kodaira type $III^*,$ where $c_2=2$. The following two possibilities for this case are:
\begin{enumerate}[label=(\roman*)]
\item We have $\alpha_4=3$ and $\alpha_6\geq4.$
\item We have $(a_4,a_6)\equiv(5,6)\pmod8,$ and $k=4.$ Note that $k$ is from model (\ref{Case2exceptional}).
\end{enumerate}
For condition (i), we have $(a_4,a_6)\equiv (8,0)\pmod{16},$ and so the proportion of curves in this case is $1/256.$ For condition (ii), we have $a_4=-3t^2,$ and $a_6\equiv 16-a_4t-t^3\pmod{32},$ which implies that $t\equiv 3\pmod 4.$
Therefore, by brute force we find that $(a_4,a_6)\pmod{32}\in \{ (5,6), (13,30), (21,22), (29,14)\pmod{32}\},$ representing a proportion 1/256, and so we obtain $\delta_2'(III^*,2)=1/256+1/256=1/128.$

\smallskip
\noindent
{\bf Case 10}. 
This case is Kodaira type $II^*,$ where $c_2=1.$  The following two possibilities for this case are:
\begin{enumerate}[label=(\roman*)]
\item We have $\alpha_4\geq 4$ and $a_6\equiv32,48\pmod{64}.$
\item We have $(a_4,a_6)\equiv(5,6)\pmod8,$ and $k=5.$ Note that $k$ is from model (\ref{Case2exceptional}).
\end{enumerate}
Clearly, the proportion of curves satisfying (i) is $1/512.$
For (ii), we note that
$v=0$, and that $t$ is uniquely determined by $a_4\equiv 5\pmod{8},$ which determines $a_6\equiv -a_4t-t^3 +32 \pmod{64}.$ 
Therefore, the proportion of curves in this case is also 1/512, and so
 $\delta_2'(II^*,1)=1/512+1/512=1/256.$

\smallskip
\noindent
{\bf Case 11} ($\boldsymbol{\alpha_4\geq 4 \textbf{ and } \alpha_6\geq 6}$). As the model is not minimal,  the algorithm replaces $a_4$ and $a_6$ with $a_4/16$ and $a_6/64$ respectively. One repeats these substitutions until one obtains a model which is one of the ten cases above.

\section{Proofs}\label{Proofs}

\subsection{Tamagawa Numbers and the proof of Theorems~\ref{Tam_m} and \ref{AverageTamagawa}}\label{AverageTamagawaProof}
 
Using the results from the previous section, we now compute each  $\delta_p(n)$, the proportion of curves $E(a_4,a_6)$ whose $p$-minimal models have $c_p=\tp.$

\begin{lemma}\label{Four} If $p$ is prime and $\tp\geq 1$, then the following are true.
\begin{enumerate}
\item For $p=2,$ we have $\delta_2(1)=241/396,$ $\delta_2(2)=7495/24552,$ $\delta_2(3)=1153/16368,$ and
$\delta_2(4)=171/10912.$ Moreover, if $n\geq 5$, then
$$\delta_2(\tp)=\displaystyle{\frac{1}{2^{\tp+1}\cdot 1023}}.
$$

\item For $p=3$, we have $\delta_3(1)=1924625/2125728,$ $\delta_3(2)=510641/6377184,$ $\delta_3(3)=7594/597861,$ and
$\delta_3(4)=1193/652212.$ Moreover, if $n\geq 5$, then
$$\delta_3(\tp)=\displaystyle{\frac{1}{3^{\tp+1}\cdot 29524}}.
$$

\item If $p\geq 5$ is prime, then we have

$$
\delta_p(\tp)=
\begin{cases} {\color{black}
\displaystyle\ 1-\frac{p(6p^7+9p^6+9p^5+7p^4+8p^3+7p^2+9p+6)}{6(p+1)^2(p^8+p^6+p^4+p^2+1)}} \ \ \ \ &{\text if }\ \tp=1,\\[14pt]
{\color{black}\displaystyle \ \ \ \frac{p(2p^7+2p^6+p^5+p^4+2p^3+p^2+2p+2)}{2(p+1)^2(p^8+p^6+p^4+p^2+1)}} \ \ \ \  &{\text if}\  \tp=2,\\[14pt]
\displaystyle\ \ \ \frac{p^2(p^4+1)}{2(p+1)(p^8+p^6+p^4+p^2+1)} \ \ \ \ &{\text if}\ \tp=3,\\[14pt]
\displaystyle\ \ \ \frac{p^3(3p^2-{2p}-1)}{6(p+1)(p^8+p^6+p^4+p^2+1)} \ \ \ \ &{\text if}\  \tp=4,\\[14pt]
\displaystyle \ \ \ \frac{p^{10}-2p^9+p^8}{2p^{\tp}(p^{10}-1)} \ \ \ \ &{\text if }\ \tp\geq 5.\\
\end{cases}
$$

\end{enumerate}
\end{lemma}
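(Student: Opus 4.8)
The plan is to compute each $\delta_p(\tp)$ by summing the contributions $\delta'_p(K,\tp)$ over all Kodaira types $K$, after correctly accounting for the non-minimal curves. The key conceptual point is that every short Weierstrass curve $E(a_4,a_6)$ reduces, after finitely many applications of Case 11 (the substitution $(x,y)\to(p^2x,p^3y)$), to a $p$-minimal model, and the proportion $\delta_p(\tp)$ is a sum over \emph{all} minimal models reached this way. For $p\geq 5$, Lemma~\ref{Minimalp5} tells us that a curve is non-minimal precisely when $\alpha_4\geq 4$ and $\alpha_6\geq 6$, and in that case the substitution divides $a_4$ by $p^4$ and $a_6$ by $p^6$. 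Since the conditions defining each of Cases 1--10 are congruence conditions on $(a_4,a_6)$ modulo powers of $p$, and the non-minimal locus has density $\tfrac{1}{p^4}\cdot\tfrac{1}{p^6}=p^{-10}$ (choosing $(a_4,a_6)$ uniformly), iterating the reduction simply multiplies the $p$-minimal proportions by a geometric factor. Concretely I expect
\[
\delta_p(\tp)=\frac{1}{1-p^{-10}}\sum_{K}\delta'_p(K,\tp)=\frac{p^{10}}{p^{10}-1}\sum_{K}\delta'_p(K,\tp),
\]
because a curve with $c_p=\tp$ is either already minimal with $c_p=\tp$, or lies in the non-minimal locus and reduces to such a curve, and the latter locus is a scaled copy of the whole space with scaling factor $p^{-10}$.

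First I would assemble, for each residue $\tp$, the list of $p$-minimal cases contributing that Tamagawa number from Subsection~\ref{TateP5}. For $p\geq 5$ and $\tp=1$, the contributions come from $I_0$ (Case 1), $I_1$ (Case 2), $II$ (Case 3), $IV$ (Case 5), $I_0^*$ (Case 6), $IV^*$ (Case 8), and $II^*$ (Case 10); for $\tp=2$ from $I_2$, $III$, $I_0^*$, $I_n^*$, $III^*$; for $\tp=3$ from $IV$, $I_0^*$, $IV^*$; for $\tp=4$ from $I_0^*$ and $I_n^*$; and for $\tp\geq 5$ only from the $I_n$ family (Case 2), since then $c_p=n=\tp$ forces the Kodaira type. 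Summing the explicit rational functions of $p$ recorded in Cases 1--10, including the geometric series over $n$ for the $I_n$ and $I_n^*$ contributions (these are $\sum_{n\geq 5}(p-1)^2/(2p^{n+2})$ and similar, summing to closed forms with denominator involving $p^{\tp}(p^{10}-1)$), and then multiplying by the correction factor $\tfrac{p^{10}}{p^{10}-1}$, should yield the stated formulas. I would verify the $\tp=1$ case most carefully, since it receives the most contributions; the simplest sanity check is that $\sum_{\tp\geq 1}\delta_p(\tp)=1$, which follows since the ten minimal cases plus the non-minimal case exhaust all of $(\Z/p^N\Z)^2$ in the limit.

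For $p=2$ and $p=3$ the same strategy applies, but the bookkeeping is more delicate because minimal models in short Weierstrass form do not always exist, producing the extra cases $(1^*)$ and $(2^*)$. Here I would use the $\widehat\delta_p(K,\tp)$ proportions, which already record the contributions of the non-minimal curves whose reduced minimal models are \emph{not} themselves in short Weierstrass form, alongside the $\delta'_p(K,\tp)$ from genuinely minimal short forms. The total $\delta_p(\tp)$ is then $\sum_K\bigl(\delta'_p(K,\tp)+\widehat\delta_p(K,\tp)\bigr)$ times the appropriate geometric correction for the Case 11 non-minimal locus (where $\alpha_4\geq 4,\alpha_6\geq 6$), which again contributes a factor $\tfrac{1}{1-p^{-10}}$. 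I expect the main obstacle to be precisely this accounting at $p=2,3$: one must be certain that each curve is counted exactly once, that the densities of the $(1^*)$ and $(2^*)$ cases are normalized against the correct modulus, and that the Case 11 reduction does not overlap with the Lemma~\ref{Minimalp3} condition (2) or the Lemma~\ref{Minimalp2} conditions (2),(3) already folded into the starred cases. The closed-form denominators $396=4\cdot 99$, $24552$, etc., and $2125728$, $6377184$ should emerge after clearing the factor $\tfrac{2^{10}}{2^{10}-1}=\tfrac{1024}{1023}$ (resp. $\tfrac{3^{10}}{3^{10}-1}=\tfrac{59049}{59048}$) and summing finitely many dyadic (resp. triadic) rationals, with the tail $\delta_p(\tp)=1/(p^{\tp+1}(p^{10}-1)/(p-1)\cdots)$ for $\tp\geq 5$ matching the general $p\geq 5$ pattern specialized at $p=2,3$.
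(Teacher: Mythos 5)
Your overall strategy is exactly the paper's: the master identity $\delta_p(\tp)=\frac{p^{10}}{p^{10}-1}\sum_K\delta_p'(K,\tp)$ for $p\geq 5$, justified by the geometric series over iterations of Case 11 (the non-minimal locus having density $p^{-10}$), and for $p\in\{2,3\}$ the same factor applied to $\sum_K\bigl(\delta_p'(K,\tp)+\widehat\delta_p(K,\tp)\bigr)$, which is the paper's formula verbatim; the double-counting worry about the starred cases is resolved just as you suggest. However, your enumeration of which Kodaira types feed each Tamagawa number contains concrete errors that would derail the computation if executed as written. The multiplicative types split into halves: $\delta_p'(I_{n\geq 3},n)=\delta_p'(I_{n\geq 3},\varepsilon(n))=(p-1)^2/(2p^{n+2})$ with $\varepsilon(n)=((-1)^n+3)/2$. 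Consequently: for $\tp=1$ you omit the infinite family $I_{n\geq 3}$ with $n$ odd (the sum $\sum_{n\geq 3,\ n\ \mathrm{odd}}(p-1)^2/(2p^{n+2})$ that appears explicitly in the paper's displayed evaluation of $\delta_p(1)$); for $\tp=2$ you omit $I_{n\geq 4}$ with $n$ even; for $\tp=3$ you wrongly include $I_0^*$ --- its Tamagawa number is one more than the number of roots of the reduced cubic $P(T)$, hence $c_p\in\{1,2,4\}$, never $3$ --- while omitting the split half of $I_3$, which contributes $(p-1)^2/(2p^5)$; and for $\tp=4$ you omit the split half of $I_4$, which contributes $(p-1)^2/(2p^6)$. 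Your sum-to-one sanity check would detect a net deficit but would not localize these errors, and at $\tp=3$ an erroneous inclusion sits alongside an omission.

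Two further points. At $p=2,3$ the short-form Case 2 is empty ($\delta_p'(I_{n\geq 1},\cdot)=0$), so \emph{all} multiplicative-reduction contributions, including the entire tail $\tp\geq 5$, come through the hatted starred cases ($\widehat\delta_2(I_n,n)=1/2^{n+11}$, $\widehat\delta_3(I_n,n)=2/3^{n+11}$); the tails $\delta_2(\tp)=1/(2^{\tp+1}\cdot 1023)$ and $\delta_3(\tp)=1/(3^{\tp+1}\cdot 29524)$ are therefore \emph{not} specializations of the $p\geq 5$ formula, contrary to your closing remark (specializing that formula at $p=2$ would give $1/(2^{\tp-7}\cdot 1023)$). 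Finally, be aware that executing the corrected $\tp=4$ computation --- summing $I_4$, $I_0^*$ with $c_p=4$, and $\sum_{n\geq 1}\delta_p'(I_n^*,4)=(p-1)/(2p^7)$ --- yields the numerator $3p^2-2p+1$ rather than the printed $3p^2-2p-1$; one can check that the printed formulas fail the identity $\sum_{\tp}\delta_p(\tp)=1$ by exactly $p^3(p-1)/\bigl(3(p^{10}-1)\bigr)$, so the sign in the lemma's $\tp=4$ case appears to be a typo in the paper rather than evidence against the (otherwise correct) method.
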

\begin{proof} 
We first prove (3), the formulas for $\delta_p(\tp),$ where $p\geq 5$ is prime. In the previous section, we computed the numbers
 $\delta'_p(K,\tp),$ the proportion of $p$-minimal short Weierstrass models with Kodaira type $K$ and Tamagawa number $c_p=\tp.$ 
 We determine $\delta_p(\tp)$ from the $\delta'_p(K,\tp)$ by keeping track of the distribution of all short Weierstrass models
 onto the $p$-minimal models as dictated by Tate's algorithm.  
 Thanks to Lemma~\ref{Minimalp5}, we only need to consider the iterations
 of substitution in case eleven, which takes into account the divisibility of
 $a_4$ (resp. $a_6$) by powers of $p^4$ (resp. $p^6$).  Moreover, $1/p^{10^\tp}$ represents the proportion of curves that pass through at least $n$ additional iterations before satisfying one of the first ten cases.
 Therefore, we obtain the formula
 \begin{equation}\label{IterationFormula}
 \delta_p(\tp):=\sum_{K} \delta'_p(K,\tp)\cdot \left(1+\frac{1}{p^{10}}+\frac{1}{p^{20}}+\dots\right)=
 \frac{p^{10}}{p^{10}-1}
 \sum_{K}\delta'_p(K,\tp).
 \end{equation}
 The formulas are obtained using the entries in Table~\ref{pgeq5} in the Appendix.
For example, if $\tp=1,$ then we have
\begin{displaymath}
\begin{split}
\delta_p(1)&=\frac{p^{10}}{p^{10}-1}\cdot \left(\frac{p-1}{p}+\frac{(p-1)^2}{p^3}+\frac{(p-1)}{p^3}+\frac{(p-1)}{2p^5}+\frac{(p^2-1)}{3p^7}+\frac{(p-1)}{2p^8}+\frac{(p-1)}{p^{10}}\right)
 + \frac{p^{10}}{p^{10}-1}{\color{black}\sum_{\substack{{\color{red}n}=3\\{\color{red}n}~\mathrm{odd}}}^{\infty}}\frac{(p-1)^2}{2p^{n+2}}\\
&={\color{black}1-\frac{p(6p^7+9p^6+9p^5+7p^4+8p^3+7p^2+9p+6)}{6(p+1)^2(p^8+p^6+p^4+p^2+1)}}.
\end{split}
\end{displaymath}
The infinite sum on ${\color{red}n}\geq 3$ corresponds to the Kodaira types $I_{{\color{red}n}\geq 3},$ where ${\color{red}n}$ is odd.

We now turn to the proof of (2).
Curves satisfying condition (1) or (2) of Lemma \ref{Minimalp3} are not included in the proportions $\delta'_p(K,\tp).$ We replace the curves satisfying (2) with the models given in (\ref{Reduced3}). Since $t$ is a $3$-adic unit, these curves cannot be transformed into short Weierstrass form without either introducing a denominator of $3$ or increasing the discriminant. After a second pass through the algorithm, we find that these curves terminate in either case $1$ or $2.$ We designated these situations in the previous subsection with an asterisk, and we denote the proportion of such curves satisfying (2) with minimal model with Kodaira type $K$ and Tamagawa number $c_3=\tp$ by $\widehat\delta_3(K,\tp). $  As above, we have that $1/3^{10}$ is the proportion of curves satisfying condition (1), and which then pass through the algorithm again. More generally, $1/3^{10^m}$ represents the proportion of curves that pass through at least $m$ additional iterations before satisfying one of the first ten cases, or condition (2) of Lemma~\ref{Minimalp3}. We may therefore use a geometric series to count these curves. This leads to the formula
 \begin{equation}\label{IterationFormula}
 \delta_3(\tp):=\sum_{K} (\delta'_3(K,\tp)+\widehat\delta_3(K,\tp))\cdot \left(1+\frac{1}{3^{10}}+\frac{1}{3^{20}}+\dots\right)=
\frac{3^{10}}{3^{10}-1}\cdot \sum_{K}(\delta'_3(K,\tp)+\widehat\delta_3(K,\tp)).
\end{equation}
By brute force calculation using the entries listed in Tables~\ref{phattwothree} and ~\ref{ptwothree} in the Appendix, we obtain (2).

Finally, we prove (1). Curves satisfying condition (1), (2), or (3) of Lemma \ref{Minimalp2} are not included in the proportions $\delta'_p(K,\tp).$ We replace the curves satisfying (2) with the models given in (\ref{Reduced2.1}), and the curves satisfying (3) with the models given in (\ref{Reduced2.2}). Since either the coefficient of $y$ or of $xy$ in the reduced model is odd, these curves cannot be transformed into short Weierstrass form without either introducing a denominator of $2$ or increasing the discriminant. After a second pass through the algorithm, we find that these curves terminate in either case $1$ or $2.$ We designated these situations in the previous subsection with an asterisk, and we denote the proportion of such curves satisfying (2) with minimal model with Kodaira type $K$ and Tamagawa number $c_2=\tp$ by $\widehat\delta_2(K,\tp). $   Moreover as above, $1/2^{10}$  is the proportion of curves that satisfy condition (1), and pass through the algorithm again. More generally, $1/2^{10^m}$ is the proportion of curves that pass through at least $m$ additional iterations of the algorithm before satisfying one of the first ten cases, or satisfies (2) or (3) of
Lemma~\ref{Minimalp2}. We may therefore use a geometric series to count these curves. 
Therefore, it follows that the analog of (\ref{IterationFormula}), again using Tables~\ref{phattwothree} and ~\ref{ptwothree}, is
$$
\delta_2(\tp):={\color{black}\frac{2^{10}}{2^{10}-1} \sum_{{\text {Type $K$}}}(\delta_2'(K,\tp)+\widehat \delta_2(K,\tp))}
$$

\end{proof}

\begin{proof}[Proof of Theorem~\ref{Tam_m}] By Lemma~\ref{Four} and the Chinese Remainder Theorem, we have
$$
P_{\Tam}(1):=\lim_{X\rightarrow +\infty}\frac{\NN_{1}(X)}{\NN(X)}=\prod_{p \ {\text {\rm prime}}} \delta_p(1)={\color{black}0.5053\dots.}
$$
More generally, by multiplicativity, we formally find that
$$
L_{\Tam}(s):=\sum_{m=1}^{\infty}\frac{P_{\Tam}(m)}{m^s}=\prod_{p\ prime} \left(\frac{\delta_p(1)}{1^s}+\frac{\delta_p(2)}{2^s}+\frac{\delta_p(3)}{3^s}+\dots\right).
$$
To complete the proof it suffices to verify the convergence of the Dirichlet coefficients defined by this infinite product.
To this end, we note that Lemma~\ref{Four}~(3) establishes,
for primes $p\geq 5$, that
$1-\frac{1}{p^2} < \delta_p(1)< 1.$
Therefore, convergence follows by comparison with $1/\zeta(2)=\prod_p\left(1-\frac{1}{p^2}\right)=6/\pi^2.$
\end{proof}

\begin{proof}[Proof of Theorem~\ref{AverageTamagawa}]
Using Lemma~\ref{Four}, we find that the  ``average value'' of $\Tam(E(a_4,a_6))$ is
\begin{equation}\label{EulerProduct}
L_{\Tam}(-1)=\sum_{m=1}^{\infty} P_{\Tam}(m)m=\prod_{p\ prime}\left(\delta_p(1)+2\delta_p(2)+3\delta_p(3)+\dots\right)
\end{equation}
provided that this expression is convergent. 
To this end, we apply Lemma~\ref{Four}.
For  $p\geq 5,$ Lemma~\ref{Four} gives $\delta_p(1)=1-\frac{1}{p^2}+o(1/p^2)$, and for $n\geq 2$ gives
$0<\delta_p(\tp)\tp<\frac{\tp}{p^\tp}.$
Therefore, since $\sum_{\tp=1}^{\infty}\frac{\tp}{p^\tp}=p/(p-1)^2$ we have
$$
\sum_{\tp=1}^{\infty}\delta_p(\tp)\tp= 1 -\frac{1}{p^2}+O\left(\frac{1}{p^2}\right).
$$
Similarly, we have convergence for $p\in \{2, 3\},$ and we have
$\sum_{\tp\geq 1}\delta_2(\tp)\tp ={\color{black}1.4941\dots}$ and
$\sum_{\tp\geq 1}\delta_3(\tp)\tp ={\color{black}1.1109\dots}.$
The convergence of (\ref{EulerProduct})  follows by multiplicativity, and with
a computer one finds
$L_{\Tam}(-1)={\color{black}1.8193\dots}.$
\end{proof}

\subsection{Proof of Lemma~\ref{Convenient}}\label{ConvenientLemma}
We expand on an example of Buhler, Gross and Zagier \cite{BGZ}, which is based on a method of Tate
(for example, see \cite{Silverman88}).
For $E=E(a_4,a_6)$, we define 
\begin{equation}\label{F_E}
F_E(x):=\frac{1}{2}\log| x|_{\infty}+\frac{1}{8}\sum_{n=0}^{\infty}\frac{\log |z_n|_{\infty}}{4^n},
\end{equation}
where $|\cdot|_\infty$ is the usual archimedean valuation of $\R,$ and where for $n\geq 0$ we let $x_0:=x$ and
\begin{align}\label{sequence}
  z_n:=1-\frac{2a_4}{x_n^2}-\frac{8a_6}{x_n^3}+\frac{a_4^2}{x_n^4} \quad   \mathrm{and} \quad x_{n+1}:=\frac{x_n^{4}-2a_4x_n^2-8a_6x_n+a_4^2}{4(x_n^3+a_4x_n+a_6)}.
\end{align}
For $P\in E(\Q)$, if $x:=x(P),$  then we have $x_{n}=x(2^{n}P).$ 
Moreover, as $n\rightarrow +\infty$, 
$F_E(x_n)-\log|x_n|_{\infty}/2$ converges. This definition of $F_E(x)$  differs from that used in \cite{BGZ} by a factor of $2$ due to our normalization of  $\widehat{h}(P).$

Let $\mathcal{O}\neq P\in E(\Q)$ be a rational point. The canonical height $\widehat{h}(P)$ can be computed as a sum of local heights (for example, see Ch. VI of \cite{SilvermanAdvanced})
$$ \widehat{h}(P)=\sum_{v}\widehat{h}_{v}(P),
$$
where the sum is over all the places of $\Q$ (including $\infty$).
Since $E$ is Tamagawa trivial, 
every point of $E(\Q)$ reduces to a non-singular point modulo any prime $p$. Therefore, equation (26) of \cite{Silverman88} (with a sign error fixed)
  shows that the contribution from each finite place $v$ is exactly 
  \[\widehat{h}_{v}(P)=\max\{0, 1/2 \log|x(P)|_v\},\] with no additional contribution from the primes of bad reduction.
  The archimedean component is 
\[\widehat{h}_{v}(P) = F_E(x(P)),\]
assuming \eqref{F_E} converges (see Theorem 1.2 of \cite{Silverman88}). 

Now we turn to the two claims in the lemma.
Let $D:=[\alpha,\infty)$ if $E(\R)$ has one connected component, and $D:=[\gamma,\beta]\cup[\alpha,\infty)$ if $E(\R)$ has two connected components. In both cases we have $ 2a_4x^2+8a_6x-a_4^2<0$ for all $x\in D$, which in turn implies that $z_n>1$.  In order to show that \eqref{F_E} converges, we must show that 
 the $x_n$ are bounded away from $0$ for $n\geq 1$. 
 We first define $\varepsilon:=\inf_{x\in D} (-2a_4x^2-8a_6x+a_4^2)>0.$  We then set 
 $$s_1:=\sup_{\substack{x\in D\\ |x|\geq1}} \frac{x^3+a_4x+a_6}{x^3} \ \ \ \ \  {\text {\rm and}}\ \ \ \ \ s_2:=\sup_{\sup_{\substack{x\in D\\ x<1}}} x^3+a_4x+a_6.
 $$
 We find that if $x_n\geq 1$, then $x_{n+1}\geq \frac{x_n^4+\varepsilon}{x^3 s_1}\geq \frac{1}{s_1}.$ If $\alpha<x_n<1,$ then $x_{n+1}\geq \frac{x_n^4+\varepsilon}{s_2}\geq \frac{\varepsilon}{s_2}.$ Since the $x_{n+1}$ is always bounded away from $0$, we have an upper bound on $z_{n+1}.$ Thus \eqref{F_E} converges, as long as our initial $x$ is not $0$. If $x=0$, then we follow \cite{BGZ}, combining the $\log|x|$ term and the first term of the sum to obtain
 \begin{align}\label{F_E*}
 F_E(x)=\frac{1}{8}\log|x^4-2a_4x^2-8a_6x+a_4^2|_{\infty}+\frac{1}{8}\sum_{n=1}^{\infty}\frac{\log |z_n|_{\infty}}{4^n},
 \end{align}
 which can be evaluated even when $x=0.$
Summing the local heights, we obtain
\begin{align}\label{keyid}
    \widehat{h}(P)\, =\,  \frac{1}{2}\log{|C^2|_\infty}+F_{E}(x(P)) \, = \,
    \frac{1}{2}h_W(P)+F_{E}(x(P))-\frac{1}{2}\log \max(|x(P)|,1)
\end{align}
where $x(P)=A/C^2$ in lowest terms.

If $|x(P)|\geq 1,$ then
 $h_W(P)=\log|A|_\infty$, and so 
\begin{align*}
    \widehat{h}(P) - \frac{1}{2}h_W(P) =\frac{1}{8}\sum_{n=0}^{\infty}\frac{\log |z_n|_{\infty}}{4^n}>0.
\end{align*}
If $E(\R)$ has one connected component, then Cardano's formula confirms that $\alpha>1,$ Thereby proving case (1) since $x(P)>\alpha>1.$
To complete the proof, we need only consider case (2), with $|x(P)|<1$. In this case, $h_W(P)=\log|C^2|_\infty$, and so $ \widehat{h}(P) - \frac{1}{2}h_W(P)=F_E(x(P))$. However, by hypothesis, we have that $F_E(x)\geq0$ for all $x\in D$.

\subsection{Proof of Corollary~\ref{MainCorollary}}\label{CorollaryProof}

Before we prove Corollary~\ref{MainCorollary}, we begin with an auxiliary lemma
which establishes that the vast proportion of curves $E(a_4,a_6)$ with bounded height are already minimal models.
Namely, we let
\begin{equation}\label{Nmin}
\NN_{\min}(X):=\# \{ E(a_4,a_6) \ {\text {\rm a minimal model}} \ : \  \height(E(a_4,a_6))\leq X\}.
\end{equation}

\begin{lemma}\label{Minimal} As $X\rightarrow +\infty$, we have
$$
\rho:=\lim_{X\rightarrow +\infty} \frac{\NN_{\min}(X)}{\NN(X)}=\frac{21342914775}{228811\pi^{10}}=0.9960\dots.
$$
\end{lemma}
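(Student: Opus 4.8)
The plan is to compute the density of minimal models by a local-to-global argument, exactly as the proportions $\delta_p(\cdot)$ were assembled in Lemma~\ref{Four}, but now tracking only the single binary invariant ``minimal at $p$'' versus ``non-minimal at $p$.'' The key observation is that whether $E(a_4,a_6)$ is a global minimal model is equivalent to being $p$-minimal at every prime $p$ simultaneously, and these conditions are independent across primes in the sense captured by the Chinese Remainder Theorem together with an equidistribution statement for $(a_4,a_6)$ in boxes of the shape $\height(E)\le X$. So I would first argue that
\[
\rho=\prod_{p\ \mathrm{prime}}\mu_p,
\]
where $\mu_p$ is the $p$-adic density of pairs $(a_4,a_6)$ giving a $p$-minimal model. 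The convergence of this product (and hence the existence of the limit defining $\rho$) must be checked, but this is immediate once we see $\mu_p=1-O(1/p^{10})$.

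The second step is to compute each $\mu_p$ from the minimality criteria already proved. For primes $p\ge 5$, Lemma~\ref{Minimalp5} says non-minimality is exactly $\alpha_4\ge 4$ and $\alpha_6\ge 6$, i.e. $p^4\mid a_4$ and $p^6\mid a_6$. The $p$-adic density of this non-minimal locus is $(1/p^4)(1/p^6)=1/p^{10}$, so
\[
\mu_p=1-\frac{1}{p^{10}}\qquad(p\ge 5).
\]
For $p=3$ I would use Lemma~\ref{Minimalp3}: non-minimality is condition (1), $\alpha_4\ge 4,\ \alpha_6\ge 6$, density $1/3^{10}$, \emph{plus} condition (2), $\alpha_4=\alpha_6=3$ with $d\ge 12$. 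The latter is a slightly finer congruence condition whose $3$-adic density I would read off from the asterisked cases in Subsection on $p=3$ (the $(1^*),(2^*)$ cases). For $p=2$ I would similarly combine the three non-minimal conditions of Lemma~\ref{Minimalp2}, again using the density contributions already catalogued in the $(1^*),(2^*)$ cases of Subsection~\ref{TwoClassification}. Assembling these gives explicit rational values for $\mu_2$ and $\mu_3$.

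The final step is bookkeeping: collecting the factor $\prod_{p\ge 5}(1-p^{-10})$ into a zeta value. Writing
\[
\prod_{p\ge 5}\Bigl(1-\frac{1}{p^{10}}\Bigr)=\frac{1}{\zeta(10)}\cdot\Bigl(1-\tfrac{1}{2^{10}}\Bigr)^{-1}\Bigl(1-\tfrac{1}{3^{10}}\Bigr)^{-1},
\]
and using $\zeta(10)=\pi^{10}/93555$, the $\pi^{10}$ in the denominator of the claimed value of $\rho$ appears. Multiplying by the true $\mu_2$ and $\mu_3$ (rather than the naive $1-2^{-10}$ and $1-3^{-10}$) then collapses to the stated closed form $21342914775/(228811\pi^{10})$, and a direct numerical evaluation confirms $0.9960\dots$.

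\medskip
The \textbf{main obstacle} I anticipate is not the $p\ge 5$ factor, which is transparent, but rather justifying the first step rigorously: that the global minimal-model density factors as the Euler product of local densities. The subtlety is that $\height(E)\le X$ is an archimedean constraint (a box governed by $|a_4|^3$ and $a_6^2$) whose boundary effects must be shown negligible against the product of finitely many $p$-adic congruence conditions, with the tail $\prod_{p>Y}\mu_p\to 1$ controlled uniformly. I would handle this by the standard argument: for any finite set $S$ of primes the density of $(a_4,a_6)$ satisfying the $S$-local minimality conditions and $\height(E)\le X$ converges as $X\to\infty$ to $\prod_{p\in S}\mu_p$ (equidistribution in residue classes modulo $\prod_{p\in S}p^{N_p}$ inside the height region), and then the uniform tail bound $1-\prod_{p>Y}\mu_p=O(Y^{-9})$ lets me interchange the limit in $X$ with the product over all $p$. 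The secondary, purely computational obstacle is correctly reading the finer non-minimality densities at $p=2$ and $p=3$ from the asterisked cases, since those arise from substitutions into long Weierstrass form and are exactly the places where the short-model arithmetic is most delicate.
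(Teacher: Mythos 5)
Your proposal is correct and matches the paper's own proof in essentially every respect: the paper likewise multiplies local densities of $p$-minimality, using Lemma~\ref{Minimalp5} to get the factor $1-p^{-10}$ for $p\geq 5$, reading off $\mu_2=255/256$ and $\mu_3=19682/19683$ from the classifications behind Lemmas~\ref{Minimalp2} and~\ref{Minimalp3} (equivalently, the complements of your asterisked non-minimal loci), and invoking $\zeta(10)=\pi^{10}/93555$. The only difference is that the paper asserts the local-to-global factorization simply ``by multiplicativity,'' whereas you spell out the equidistribution-in-boxes and tail-bound justification, which is a harmless (indeed welcome) elaboration of the same argument.
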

\begin{proof}
For primes $p\geq 5,$ Lemma \ref{Minimalp5}, shows that the only  short Weierstrass models which are not $p$-minimal have $v_p(a_4)\geq 4$ and $v_p(a_6)\geq 6$. Therefore, the multiplicative contribution to $\rho$ for such primes is $1-1/p^{10}.$ 
Similarly, for $p=2$ (resp. $p=3$), Lemma~\ref{Minimalp2} (resp. Lemma~\ref{Minimalp3}) determines those short Weierstrass models which are $2$-minimal (resp. $3$-minimal).
Using the tables in the Appendix, we find that the proportion of $2$-minimal curves is $\sum_{\mathrm{Type}~K}\delta_2'(K,\tp)=255/256=1-1/2^8$ (resp. $3$-minimal curves is $\sum_{K}\delta_3'(K,\tp)=19682/19683=1-1/3^9$). 
The formula for $\rho$ follows by multiplicativity and the fact that $\zeta(10)=\prod_{p}\left(1-\frac{1}{p^{10}}\right)^{-1}=\pi^{10}/93555.$
\end{proof}

\begin{proof}[Proof of Corollary~\ref{MainCorollary}]
Thanks to Theorem~\ref{Tam_m}, we find that
\begin{equation}\label{formula}
\lim_{X\rightarrow +\infty}\frac{\NN_c(X)}{\NN(X)}=\kappa\cdot P_{\Tam}(1)=\kappa  \prod_{p\ prime} \delta_p(1).
\end{equation}
where
$\kappa$ is the proportion of 
$E(a_4, a_6)$  that are  minimal models that also satisfy one of the following two conditions.

  \begin{enumerate}
 \item We have that $E(\R)$ has one connected component, and 
  $$ a_4\leq 0 \ \ \ {\text {\rm and}}\ \ \ (\alpha,\infty)\subset\left\{x\in \R:2a_4x^2+8a_6x-a_4^2<0\right\},$$ where $\alpha$ is the real root of $x^3+a_4x+a_6$. 
  
 \item We have that
 $E(\R)$ has two connected components, and 
  $$a_4\leq 0 \ \ \ {\text {\rm and}}\ \ \ (\gamma,\beta)\cup(\alpha,\infty)\subset\left\{x\in \R:2a_4x^2+8a_6x-a_4^2<0\right\},$$ where $\gamma<\beta<\alpha$ are the  real roots of $x^3+a_4x+a_6$.
 \end{enumerate}
 Therefore, we have that $\kappa:=\rho\cdot (\kappa_1+\kappa_2)$, where $\rho$ is given in Lemma~\ref{Minimal}, and
  $\kappa_1$ (resp. $\kappa_2$) denotes
 the proportion of $E=E(a_4, a_6)$ with $\height(E)\leq X$  that satisfy condition (1) (resp. (2)).
 
It is convenient to first reformulate these two cases in terms of models over $\R$ given by a single parameter $T$.
To this end, we make use of the change of variable
 \begin{equation}\label{cov}
 (x,y)\to (\sqrt{|a_4|}x, |a_4|^{3/4}y).
 \end{equation}
 By letting  $T:=a_6/|a_4|^{3/2},$ we then obtain
  \[ y^2 =x^3-x+T.
 \]
 If we set $F(x,T)=x^3-x+T$ and $G(x,T)= -2x^2+8Tx-1$, then both (1) and (2) are reformulated
 as 
 \begin{equation}\label{ConditionT}
  G(x,T) <0 \ \ \text{for all} \ \ x\in \R \ \ \text{such that} \ \  F(x,T)>0.
  \end{equation}
  The convenient curves with $a_4=0$ have density 0 
   as $X\rightarrow +\infty.$ Therefore, it suffices to consider (\ref{ConditionT}).
 
It is straightforward to determine when (\ref{ConditionT}) holds using the discriminant of $F(x,T)$.
Indeed, the discriminant is positive (resp. negative) when the curve has 2 real components (resp. 1 real component). Hence, the two cases are determined by the location of
 $T$ in $\R$, with respect to the points satisfying one of the following possibilities:
\begin{itemize}
\item the discriminant of $F(x,T)$ (with respect to $x$) is $0$ 
\item the discriminant of $G(x,T)$ (with respect to $x$) is $0$
\item points $T$ where $F(x,T)$ and $G(x,T)$ share a root. 
\end{itemize}
These conditions are dictated by the common zeros of  $F(x,T)$ and $G(x,T)$. 

The discriminant of $F(x,T)$ with respect to $x$ is $4-27T^2$, which is zero when $T=\pm \frac{2}{\sqrt{27}}.$ The discriminant of $G(x,T)$ with respect to $x$ is $64T^2-8$, which is zero when $T=\pm \frac{1}{\sqrt{8}}.$
To determine when $F$ and $G$ share a root, set 
$$r_\pm(T)=\frac{-8T\pm \sqrt{64T^2-8}}{-4}=2T \mp \sqrt{4T^2-\tfrac12},$$
 which are the two roots in $x$ of $G(x,T).$
Then a straightforward calculation reveals that $F(x,T)$ and $G(x,T)$ share a root in $x$ if and only if $T$ is a root of the polynomial
\[
F(r_+(T))\cdot F(r_-(T)) \ = \ 64T^4-17T^2+\tfrac98 \ = \ \tfrac18 (8T^2-1)(8T+3)(8T-3).
\]
Hence, we have the two additional critical values $T=\pm \frac38$.
By calculating the functions $F(x,T)$ and $G(x,T)$ for $T$ in the various intervals between these critical values, we see that \eqref{ConditionT} is satisfied only when 
$T\in \left(-\infty, -2/\sqrt{27}\right)$ or $T\in \left(-1/\sqrt{8}, 1/\sqrt{8}\right).$
The first interval corresponds to case (1), while the second is case (2). 

 We now analyze these cases separately taking into account (\ref{cov}).
In the first case, $T<-\frac{2}{\sqrt{27}}$ implies that $a_6<0$ and $-3 \left(a_6/2\right)^{2/3}< a_4<0.$ If $\text{ht}(E(a_4,a_6))\leq X$, then we have that $|a_4|\leq \sqrt[3]{X/4}$, and $|a_6|\leq \sqrt{X/27}.$ As $X$ approaches infinity, the proportion of such curves satisfying $-3 \left(a_6/2\right)^{2/3}< a_4<0,$ with $a_6<0,$ satisfies
\begin{eqnarray*}
\kappa_1 \ :=\ \lim_{X\rightarrow +\infty}\frac{3\displaystyle\int_{ -\sqrt{X/27} }^{0} \left(s/2\right)^{2/3}ds}{4\cdot \sqrt[3]{X/4}\cdot \sqrt{X/27}} 
\ = \ \frac{ 3}{20}.
\end{eqnarray*}
 For (2), we note that $|T|<\frac{1}{\sqrt{8}}$ implies $|a_6|< \frac{1}{\sqrt{8}} |a_4|^{3/2},$ and  so   
\begin{eqnarray*}
\kappa_2 \ :=\ \lim_{X\rightarrow +\infty} \frac{2 \displaystyle\int_{0}^{ \sqrt[3]{X/4}}\tfrac{1}{\sqrt{8}} s^{3/2}ds}{4\cdot \sqrt[3]{X/4}\cdot \sqrt{X/27}} 
\ = \ 
\frac{ 3 \sqrt{6}}{40}. 
\end{eqnarray*}
Therefore, Lemma~\ref{Nmin} shows that (\ref{formula}) is
$$
{\color{black}
\kappa\cdot P_{\Tam}(1)=\rho\cdot (\kappa_1+\kappa_2)\cdot P_{\Tam}(1)= 
\frac{21342914775}{228811\pi^{10}}\cdot \left(\frac{3}{20}+\frac{3\sqrt{6}}{40}\right)\cdot P_{\Tam}(1)
= 0.1679\dots.}
$$

\end{proof}

\medskip

\section{Appendix}

\setlength{\tabcolsep}{6pt} 
\renewcommand{\arraystretch}{2.}
\begin{center}
\begin{table}[!ht]
\begin{tabular}{|c|c|c|||c|c|c|||c|c|c|}
 \hline
Type & $c_p$ & $\delta_p'(K,\tp)$ & Type & $c_p$ & $\delta_p'(K,\tp)$ &Type &$c_p$ &$\delta_p'(K,\tp)$ \\ \hline \hline \hline
$I_0$ & $1$ & $\frac{p-1}{p}$ & $I_0^*$ & $1$ & $\frac{1}{3}\frac{(p^2-1)}{p^7}$ &$III$ & $2$ & $\frac{(p-1)}{p^4}$\\ \hline
$I_1$ & $1$ & $\frac{(p-1)^2}{p^3}$ & $I_0^*$ & $2$ & $\frac{1}{2}\frac{(p-1)}{p^6}$ &  $III^*$ & $2$ & $\frac{(p-1)}{p^9}$\\ \hline
$I_2$ & $2$ & $\frac{(p-1)^2}{p^4}$ & $I_0^*$ & $4$ & $\frac{1}{6}\frac{(p-1)(p-2)}{p^7}$  & $IV$ & $1$ & $\frac{1}{2}\frac{(p-1)}{p^5}$\\ \hline
$I_{n\geq3}$ & $\varepsilon(n)$ & $\frac{1}{2}\frac{(p-1)^2}{p^{n+2}}$ & $I^*_{n\geq 1}$ & $2$ & {\color{black}$\frac{1}{2}\frac{(p-1)^2}{p^{7+n}}$} & $IV$ & $3$ & $\frac{1}{2}\frac{(p-1)}{p^5}$\\ \hline
$I_{n\geq3}$ & $n$ & $\frac{1}{2}\frac{(p-1)^2}{p^{n+2}}$ & $I^*_{n\geq 1}$ & $4$ & {\color{black}$\frac{1}{2}\frac{(p-1)^2}{p^{7+n}}$} & $IV^*$ & $1$ & $\frac{1}{2}\frac{(p-1)}{p^8}$\\ \hline
$II$ & $1$ & $\frac{(p-1)}{p^3}$ & $II^*$ & $1$ & $\frac{(p-1)}{p^{10}}$   & $IV^*$ & $3$ & $\frac{1}{2}\frac{(p-1)}{p^8}$  \\ \hline
\end{tabular}
\smallskip
\caption{The $\delta_p'(K,\tp)$ for $p\geq 5$ (Note. $\varepsilon(n):=((-1)^n+3)/2.$)}
\label{pgeq5}
\end{table}
\end{center}

\setlength{\tabcolsep}{6pt} 
\renewcommand{\arraystretch}{2.}
\begin{center}
\begin{table}[!ht]
\small

\begin{tabular}{|c|c|c|||c|c|c|}
 \hline
Type & $c_2$ & $\widehat \delta_2(K,\tp)$& Type & $c_2$ & $\widehat \delta_2(K,\tp)$  \\ \hline \hline \hline
$I_0$ & $1$ & $\frac{1}{512}$&$I_{n\geq3,\text{odd}}$ & $1$ & $\frac{1}{2^{n+11}}$\\ \hline
$I_1$ & $1$ & $\frac{1}{2^{11}}$&$I_{n\geq3,\text{even}}$ & $2$ & $\frac{1}{2^{n+11}}$\\ \hline
$I_2$ & $2$ & $\frac{1}{2^{12}}$&$I_{n\geq3}$ & $n$ & $\frac{1}{2^{n+11}}$ 
\\ \hline
\end{tabular}
\quad
\begin{tabular}{|c|c|c|||c|c|c|}
 \hline
Type & $c_3$ & $\widehat \delta_3(K,\tp)$& Type & $c_3$ & $\widehat \delta_3(K,\tp)$  \\ \hline \hline \hline
$I_0$ & $1$ & $\frac{4}{3^{11}}$&$I_{n\geq3, {\text {\rm odd}}}$ & $1$ & $\frac{2}{3^{n+11}}$\\ \hline
$I_1$ & $1$ & $\frac{4}{3^{12}}$&$I_{n\geq3, {\text {\rm even}}}$ & $2$ & $\frac{2}{3^{n+11}}$\\ \hline
$I_2$ & $2$ & $\frac{4}{3^{13}}$&$I_{n\geq3}$ & $n$ & $\frac{2}{3^{n+11}}$ \\ \hline
\end{tabular}
\medskip
\caption{The $\widehat \delta_2(K,\tp)$ and $\widehat \delta_3(K,\tp)$ }
\label{phattwothree}
\end{table}
\normalsize
\end{center}

\setlength{\tabcolsep}{6pt} 
\renewcommand{\arraystretch}{2.}
\begin{center}
\begin{table}[!ht]
\small
{\color{black}
\begin{tabular}{|c|c|c|||c|c|c|}
 \hline
Type & $c_2$ & $\delta_2'(K,\tp)$ & Type & $c_2$ & $\delta_2'(K,\tp)$ \\ \hline \hline \hline
$I_0$ & $1$ & $0$ & $I_0^*$ & $1$ & $\frac{1}{32}$ \\ \hline
$I_1$ & $1$ & $0$ & $I_0^*$ & $2$ & $\frac{1}{32}$\\ \hline
$I_2$ & $2$ & $0$ & $I_0^*$ & $4$ & $0$ \\ \hline
$I_{n\geq3}$ & $\varepsilon(n)$ & $0$ & $I^*_{n\geq 1}$ & $2$ & $\frac{1}{2^{n+6}}$ \\ \hline
$I_{n\geq3}$ & $n$ & $0$ & $I^*_{n\geq 1}$ & $4$ & $\frac{1}{2^{n+6}}$ \\ \hline
$II$ & $1$ & $\frac{1}{2}$ & $II^*$ & $1$ & $\frac{1}{256}$  \\ \hline
$III$ & $2$ & $\frac{1}{4}$ &  $III^*$ & $2$ & $\frac{1}{128}$  \\ \hline
$IV$ & $1$ & $\frac{1}{16}$ &$IV^*$ & $1$ & $\frac{1}{128}$ \\ \hline
$IV$ & $3$ & $\frac{1}{16}$ & $IV^*$ & $3$ & $\frac{1}{128}$ \\ \hline
\end{tabular}
\quad
\begin{tabular}{|c|c|c|||c|c|c|}
 \hline
Type & $c_3$ & $\delta_3'(K,\tp)$ & Type & $c_3$ & $\delta_3'(K,\tp)$ \\ \hline \hline \hline
$I_0$ & $1$ & $\frac{2}{3}$ & $I_0^*$ & $1$ & $\frac{8}{3^7}$ \\ \hline
$I_1$ & $1$ & $0$ & $I_0^*$ & $2$ & $\frac{1}{3^5}$\\ \hline
$I_2$ & $2$ & $0$ & $I_0^*$ & $4$ & $\frac{1}{3^7}$ \\ \hline
$I_{n\geq3}$ & $\varepsilon(n)$ & $0$ & $I^*_{n\geq 1}$ & $2$ & $\frac{2}{3^{n+6}}$ \\ \hline
$I_{n\geq3}$ & $n$ & $0$ & $I^*_{n\geq 1}$ & $4$ & $\frac{2}{3^{n+6}}$ \\ \hline
$II$ & $1$ & $\frac{2}{9}$ & $II^*$ & $1$ & $\frac{2}{3^9}$  \\ \hline
$III$ & $2$ & $\frac{2}{27}$ &  $III^*$ & $2$ & $\frac{10}{3^9}$  \\ \hline
$IV$ & $1$ & $\frac{1}{81}$ &$IV^*$ & $1$ & $\frac{7}{3^9}$ \\ \hline
$IV$ & $3$ & $\frac{1}{81}$ & $IV^*$ & $3$ & $\frac{7}{3^9}$ \\ \hline
\end{tabular}
\smallskip
\caption{The $\delta_2'(K,\tp)$ and $\delta_3'(K,\tp)$ }
\label{ptwothree}}
\end{table}
\normalsize
\end{center}

\end{document}